\definecolor{gray75}{gray}{0.75}
\newcommand{\sln}{\linespread{1}}
\newcommand*{\email}[1]{\href{mailto:#1}{\nolinkurl{#1}} } 
\titleformat{\chapter}[block]{\LARGE\bfseries\sln}{Chapter \thechapter}{11pt}{\newline\huge\bfseries}
\newtheorem{thm}{Theorem}[section]
\newtheorem{rem}{Remark}[section]
\newtheorem{defn}{Definition}[section]
\newenvironment{proof}{\paragraph{Proof:}}{\hfill$\square$}
\newtheorem{lem}{Lemma}[section]
\newtheorem{proposition}{Proposition}[section]
\newtheorem{corollary}{Corollary}[section]
\begin{document}
\title{The Funk-Finsler structure on the unit disc in the hyperbolic plane}
\author{Ashok Kumar\footnote{E-mail: ashok241001@bhu.ac.in ; Centre for Interdisciplinary Mathematical Sciences, Banaras Hindu University, Varanasi-221005, India}, Hemangi Madhusudan Shah\footnote{E-mail: hemangimshah@hri.res.in ; Harish-Chandra Research Institute, A CI of Homi Bhabha National Institute, Chhatnag Road, Jhunsi, Prayagraj-211019, India.} and Bankteshwar Tiwari\footnote{E-mail: btiwari@bhu.ac.in; Centre for Interdisciplinary Mathematical Sciences, Banaras Hindu University, Varanasi-221005, India}}
   \maketitle

\begin{abstract}
\noindent 
 In this paper, we {\it construct}  Funk-Finsler structure in various models of the hyperbolic plane. In particular,  in the unit disc of the Klein model, it turns out to be a Randers metric, which is a non- Berwald Douglas metric. Further, using Finsler isometries we obtain the  Funk-Finsler structures in other models of 
 the hyperbolic plane. 
Finally, we also investigate the geometry of this Funk-Finsler metric by explicitly computing the  $S$-curvature, Riemann curvature, flag curvature, and the Ricci curvature in the Klein unit disc. 
\end{abstract}
{\footnotesize Key Words:  Finsler structure, Funk metric, Randers metric, Klein disc, Poincar\'e disc, Poincar\'e upper half plane.}\\
{\footnotesize mathematics subject classification: 53B40, 53B60, 53C50.}
\section{Introduction}
The Probe into the Funk and the Hilbert metrics was begun in order to solve the Hilbert's fourth problem: {\it Find all the non-Euclidean metrics having the line segments as their geodesics}. The Funk metric on the unit ball in the Euclidean space is a well-known Finsler metric of constant flag curvature $-\frac{1}{4}$; whereas the Hilbert metric on the unit ball, which is the arithmetic symmetrization of its Funk metric, is of constant curvature $-1$. The Funk metric on the unit disc is also an example of Randers metric on the unit ball, which can be realized as the deformation of the well-known Klein metric on the ball by a closed $ 1$-form. \\
The Funk distance  $d_{F, \Omega}(x, y)$ between any two points $x$ and $y$ in a convex set $\Omega$ in the Euclidean space $\mathbb{R}^n$ is given by
\begin{equation*}
d_{F, \Omega}(x, y):=
  \left\{
  \begin{array}{ll}
 \log\left( \frac{|x-a|}{|y-a|} \right), & \mbox{if }  x \neq y \\
0, & \mbox{if } x = y. 
\end{array}
 \right.
\end{equation*} 
where $|.|$ represents the Euclidean norm in $\mathbb{R}^n$, $\mathfrak{a}=\overrightarrow{xy} \cap \partial \Omega$ and $\overrightarrow{xy}$  denotes the  ray starting from $x$ and passing through $y$  and  $\partial \Omega=\bar{\Omega} \setminus \Omega$ denotes the  boundary of $\Omega$ (\cite{HHG}, Chapter 2, $\S 2$).\\ 
Also, the corresponding  Finsler structure on the convex set $\Omega$  is given by
    \begin{equation*}
   F_\Omega(x, \xi) := \inf  \left\lbrace t > 0 ~\vert  \left( x+\frac{\xi}{t} \right) \in   \Omega \right\rbrace,
    \end{equation*}
    where $\xi \in T_x\Omega$ ( \cite{HHG}, Chapter 3, $\S 3$ ).

    \vspace{.3cm}
    \noindent
Recently, Papadopoulos and Yamada (\cite{PAYS}) defined the Finsler structure for the Funk and the Hilbert metric on a convex set in the constant curvature spaces. In particular, the Funk distance  $d_{F, \Omega}(x, y)$ between any two points $x$ and $y$ in a convex set $\Omega$ in the upper half plane is given by,
 \begin{equation*}
d_{F,\Omega}(x, y):=\log\frac{\sinh d_U(x,\mathfrak{a})}{\sinh d_U(y,\mathfrak{a})},
\end{equation*}
where  $d_U$ represents the hyperbolic distance in upper half plane, $\mathfrak{a}=\overrightarrow{xy} \cap \partial \Omega$ and $\overrightarrow{xy}$ denotes the hyperbolic ray starting from $x$ and passing through $y$ (see \cite{PAYS},$\S 3$ ). 
 The corresponding  Finsler structure in the convex set $\Omega$ in the upper half plane is given by,
\begin{equation*}
    \mathcal{F}(x, \xi)=\sup \limits_{\pi \in \mathscr{P} } \frac{\cosh d_U(x, T(x,\xi, \pi))}{\sinh d_U(x, T(x,\xi,\pi))} ||\xi||_U.
\end{equation*}
 Here $x \in  \Omega$,  $\xi \in  T_x\Omega$, $||.||_U$ represents the hyperbolic norm in upper half plane, $\pi$ is a supporting hyperplane on  $\partial \Omega$, $\mathscr{P}$ is the collection of all the supporting hyperplanes on  $\partial \Omega$ and $T(x, \xi, \pi)=R_{x,\xi} \cap \pi$, where $R_{x,\xi}$ denotes  the geodesic ray starting from $x$ with initial tangent vector $\xi$ (see \cite{PAYS},$\S 3$ ).\\\\
Inspired by the above idea of Papadopoulos et al., we {\it construct} the Funk-Finsler metric on the Klein unit disc in the  Klein hyperbolic disc model;
we term this as the {\it Funk-Finsler metric on the Klein unit disc} (see \eqref{eqn 4.31}). We also show that this metric indeed is a Randers metric with a closed $1$-form, and therefore it is a Douglas  metric (see \cite{CXSZ}, Theorem $5.2.1$) but \textit{not} Berwald (see \cite{SSZ}, Lemma $3.1.2$).\\
We explicitly, show that this Funk-Finsler metric on the Klein unit disc $\mathbb{D}_K(1)$ can be realized as the pull back of a Lorentz-Randers metric $F_L=\alpha_L + \beta_L$  (see \eqref{eqn3.1222}) on the upper sheet of the hyperboloid of two sheets. \\
We further show that this Funk-Finsler metric on the  Klein unit disc $\mathbb{D}_K(1)$ can also be realized as the pull back of a  Randers metric $F_+ = \alpha_+ + \beta_+$ (see \eqref{eqn 3.5})   on the upper hemisphere of radius $r$. Thus, interestingly we see that the Funk-Finsler metric on the unit disc is obtained via two different types of metrics.\\
\noindent
Recently, Mester and Kristály (\cite{AMAK}) have investigated the equivalence among three models of Finsler-Poincar\'e disc and the present authors (see \cite{AHB}) have studied the realization of these models via pullbacks of certain Randers metric on upper sheet of hyperboloid and on upper hemi sphere and also found two other equivalent models.\\ 
In this paper, we also build the two equivalent models of the Funk-Finsler structure on the Klein unit disc. 
The first one from the Poincar\'e disc model and the other one from the Poincar\'e upper-half model. Finally, we 
further probe into the geometry of this
Funk-Finsler metric in the Klein unit disc and 
obtain some important curvatures like $S$, Ricci curvature, and flag curvatures. 

\noindent
The paper is organised as follows.\\
In Section $2$, we discuss the preliminaries required for the paper.
In Section $3$,  we construct explicitly the Funk-Finsler metric on the Klein unit disc in the Klein hyperbolic disc model $\mathcal{D}_{K}$. And we also find the aforementioned two equivalent models.


\noindent
In Section $4$,  we investigate geometry of the Funk-Finsler
metric on the Klein unit disc and compute explicitly  $\textbf{S}$-curvature, spray coefficients, Riemann curvature,  Ricci curvature, and the flag curvature. Finally, we also obtain the Zermelo Navigation data for the Funk-Finsler metric on the Klein unit disc, which will be useful for furthermore investigations of this metric in future.\\

In the sequel, the following notations will be used throughout the paper. \\

\textbf{Notations:}
\begin{itemize}
\item $|.|$ and $\langle . \rangle$, respectively, represent 
the Euclidean norm and the Euclidean inner product, respectively.
     \item $\mathbb{D}_E( \mathfrak{r})=\left\lbrace (x^1,x^2) \in \mathbb{R}^2 :  (x^1)^2+(x^2)^2 <  \mathfrak{r}^2 \right\rbrace$, the Euclidean disc centred at origin with radius $ \mathfrak{r}$ in $\mathbb{R}^2$.     
\item $\mathbb{U}=\left\lbrace(x^1, x^2)\in \mathbb{R}^2 : x^2 > 0\right\rbrace,$  the {upper half plane}.
\item $\mathbb{H_+} = \left\lbrace(\tilde{x}^1, \tilde{x}^2, \tilde{x}^3)\in \mathbb{R}^3 : \tilde{x}^3 = \sqrt{1+(\tilde{x}^1)^2+(\tilde{x}^2)^2} \right\rbrace,$ the upper half of the hyperboloid of two sheets.
\item $\mathbb{S}^2_+( \mathfrak{r})=\{(x^1,x^2,x^3)\in \mathbb{R}^3 :  (x^1)^2 +(x^2)^2+(x^3)^2= \mathfrak{r}^2 \ \text{and} \ x^3 > 0\},$ the
upper half of the hemisphere with radius $ \mathfrak{r}$ in $\mathbb{R}^3$.
     \item $\mathcal{D}_{K}$, the Euclidean unit disc in $\mathbb{R}^2$ centred at origin equipped with the Klein hyperbolic metric.
       \item  $\mathbb{D}_K(1)$, the Klein unit disc in $\mathcal{D}_{K}$ centred at origin.
     \item $\mathcal{D}_{P}$,  the Euclidean unit disc in $\mathbb{R}^2$ centred at origin equipped with the Poincar\'e hyperbolic  metric.
     \item  $\mathbb{D}_P(1)$, the Poincar\'e unit disc in $\mathcal{D}_{P}$ centred at origin.
     \item   $\mathbb{D}_U(a, \mathfrak{r})$, the  hyperbolic disc in $\mathbb{U}$ with centered at $a$ and hyperbolic radius $\mathfrak{r}$.
\end{itemize}

\section{Preliminaries}
The theory of Finsler manifolds can be considered as a generalization of  Riemannian manifolds, where the Riemannian metric is replaced by a so-called {\it Finsler} metric. A Finsler metric is a smoothly varying family of Minkowski norms in each tangent space of the manifold.\\
Let $ M $ be an $n$-dimensional smooth manifold and let $T_{x}M$ denote the tangent space of $M$ at $x$. The tangent bundle $TM$  of $M$ is the disjoint union of tangent spaces: $TM:= \sqcup _{x \in M}T_xM $. We denote the elements of $TM$ by $(x,\xi)$, where $\xi \in T_{x}M $ and $TM_0:=TM \setminus\left\lbrace 0\right\rbrace $.
 \begin{defn}[Finsler structure \cite{SSZ}, \boldmath$\S 1.2$]\label{def 3.A01}
   A {\it Finsler structure} on the manifold $M$ is a function $F:TM \to [0,\infty)$ satisfying the following conditions: 
   \begin{enumerate}[(i)]
       \item  $F$ is smooth on $TM_{0}$,
      \item  $F$ is a positively 1-homogeneous on the fibers of the tangent bundle $TM$,
 \item  The Hessian of $\displaystyle \frac{F^2}{2}$ with elements $\displaystyle g_{ij}=\frac{1}{2}\frac{\partial ^2F^2}{\partial \xi^i \partial \xi^j}$ is positive definite on $TM_0$.\\
 The pair $(M, F)$ is called a Finsler space, and $g_{ij}$ is called the fundamental tensor of the Finsler structure $F$.
   \end{enumerate}
 
 \end{defn}
 
 \noindent
 It is easy to see that Riemannian metrics are examples of Finsler metrics. The Randers metric is the simplest non-Riemannian example of a Finsler metric.
\begin{defn}[Randers Metric \cite{SSZ}, \boldmath$\S 1.2$]  Let $\displaystyle \alpha=\sqrt{a_{ij}(x)dx^idx^j}$ be a Riemannian metric  and $\beta=b_i(x)dx^i$ be a one-form on a smooth manifold  $M$ with $||\beta||_{\alpha}<1$, where $\displaystyle ||\beta||_{\alpha} = \sqrt {a^{ij}(x)b_{i}(x)b_{j}(x)}$; \  then $F(x,\xi)=\alpha(x,\xi)+\beta(x,\xi)$, for all $x \in M, \xi \in T_xM$, is called a Randers metric on $M$. 
\end{defn}
 
\noindent In what follows, we will be dealing with the
{\it Funk} and the {\it Hilbert} metric  on a strongly convex domain $\Omega$ in $\mathbb{R}^n$.

 \begin{defn}
   {(\bf{Funk metric} \cite{HHG}, Chapter 2, \boldmath$\S 2$)} The {\it Funk} metric on a  convex domain $\Omega \subset \mathbb{R}^n$ is denoted by $d_{F,\Omega}$ and is defined as:  
 \\ For any  $x$ and $y$ in $\Omega$ and  $m=\overrightarrow{xy}\cap \partial \Omega$,
 
\begin{equation}\label{s1}
d_{F,\Omega}(x,y)=
  \left\{
  \begin{array}{ll}
 \log\left( \frac{|x-m|}{|y-m|} \right), & \mbox{if }  x \neq y \\
0, & \mbox{if } x = y. 
\end{array}
 \right.
\end{equation} 
\end{defn}

 \begin{defn} [\textbf{Hilbert metric} \cite{HHG}, Chapter 3, \boldmath$\S 4$]
 The {\it Hilbert} metric on a  convex domain $\Omega \subset \mathbb{R}^n$,  denoted by $d_{H,\Omega}$,  is defined as: \\ 
 For any  $x$ and $y$ in $\Omega$, let $m=\overrightarrow{xy}\cap \partial \Omega$ and $\Bar{m}=\overleftarrow{xy}\cap \partial \Omega$, where $\overleftarrow{xy}$ denotes the  ray starting from $y$ and passing through $x$. Then
 \begin{equation}\label{s12}
d_{H,\Omega}(x,y)=
  \left\{
  \begin{array}{ll}
\frac{1}{2} \log \left( \frac{|y-\Bar{m}|.|x-m|}{|x-\Bar{m}|.|y-m|}\right), & \mbox{if }  x \neq y \\
0, & \mbox{if } x = y.
\end{array}
 \right.
\end{equation} 

 \end{defn}
 
\vspace{0.1in}
\noindent
 It can be easily shown that the function $d_{F,\Omega}$ 
 is an asymmetric distance function on $\Omega$,
 while the function $d_{H,\Omega}$  is the {\it legitimate} distance function on $\Omega$. These distances satisfy an interesting property that, along line segments between any two points, they are minimal. 

\begin{defn}[\textbf{Geodesics and Spray coefficients \cite{SSZ}, $\S 3.1$}] \label{defaa} 
   \noindent A smooth curve $\gamma$ in the Finsler manifold $(M^n, F)$ is a geodesic if and only if $\gamma(t)=(x^i(t))$ satisfies the differential equations:
   \begin{equation}\label{eqn2.1.13}
   \frac{d^2x^i(t)}{dt^2}+2{G}^i\left(\gamma,\frac{d \gamma}{dt}\right)=0, \qquad 1 \le i \le n.
 \end{equation}
   Here $G^i=G^i(x,\xi)$ are local functions on $TM$, called the spray coefficients defined by 
   \begin{equation}\label{eqn2.1.14}
  {G}^i=\frac{1}{4}{g}^{i{\ell}}\left\{\left[{F}^{2}\right]_{x^k\xi^{\ell}}\xi^k-	\left[{F}^{2}\right]_{x^{\ell}}\right\}.
  	 		 \end{equation} 
\end{defn}
    
 \begin{defn}  [\textbf{The Riemann curvature tensor} \cite{CXSZ}, $\S 4.1$] \label{def2}  
 	The Riemann curvature tensor $R={{R}_\xi:T_xM \rightarrow T_xM}$, for a Finsler space $(M^n,F)$ is defined by
 ${R}_{\xi}(u)={R}^{i}_{k}(x,\xi)u^{k} \frac{\partial}{\partial x^i}$,
 	\ $u=u^k\frac{\partial}{\partial x^k}$,\ where ${R}^{i}_{k}={R}^{i}_{k}(x,\xi)$ denote the coefficients of the Riemann curvature tensor of the Finsler metric $F$ and are given by,
 	\begin{equation}\label{eqn2.2.10}
 			{R}^{i}_{k}=2\frac{\partial{G}^i}{\partial	x^k}-\xi^j\frac{\partial^2{G}^i}{\partial x^j\partial	\xi^k}+2{G}^j\frac{\partial^2 {G}^i}{\partial \xi^j\partial	\xi^k}-\frac{\partial{G}^i}{\partial \xi^j}\frac{\partial {G}^j}{\partial \xi^k}.
 	\end{equation}
 	\end{defn}
 \noindent	The flag curvature $K=K(x,\xi,P)$, generalizes the sectional curvature in Riemannian geometry to the Finsler geometry and does not depend on whether one is using the Berwald, the Chern or the Cartan connection.

 	\begin{defn}[\textbf{Flag curvature} \cite{CXSZ}, $\S 4.1$] 
  \textnormal{	For a  plane $P\subset T_xM$ containing a non-zero vector $\xi$ called \textit{pole}, the \textit{flag curvature} $\textbf{K}(x,\xi,P)$ is defined by
 	\begin{equation}\label{eqn2.1.11}
 	\textbf{K}(x,\xi,P) :=\frac{g_\xi(R_\xi(u), u)}{g_\xi(\xi, \xi)g_\xi(u, u)- g_\xi(\xi, u)g_\xi(\xi, u)},
 	\end{equation}
 	where $u\in P$ is such that $P=\text{span} \left\{ \xi,u\right\} $.\\
 	 If $\textbf{K}(x,\xi,P)=\textbf{K}(x,\xi)$, then the	Finsler metric $F$ is said to be of scalar flag curvature and if $\textbf{K}(x,\xi,P)=constant$, then the Finsler metric $F$ is said to be of constant flag curvature .}
 		\end{defn}
The relation between the Riemann curvature  $R^i_j$ and the flag curvature $\textbf{K}(x,\xi)$ of a Finsler metric $F$ of scalar flag curvature is given by (see for more detail \cite{CXSZ}, $\S 4.1$)
 		 \begin{equation}\label{eqn2.1.12}
 		 R^i_j= \textbf{K}(x,\xi)\left\lbrace F^2 \delta^i_j-FF_{\xi^j}\xi^i\right\rbrace.
 		 \end{equation}
     
 \noindent It is well known that there is no canonical volume form on a Finsler manifold, like in the Riemannian case. Indeed, there are several well-known volume forms on the Finsler manifold, for instance,  the {\it Busemann-Hausdorff} volume form, the {\it Holmes-Thompson} volume form,  the {\it maximum } volume form,  the {\it minimum } volume form, etc. Here we discuss only the Busemann-Hausdorff volume form.
 \begin{defn}[Busemann-Hausdorff Volume form in Finsler manifolds \cite{SZ}, \boldmath$\S 2.2$]
 The \textit{Busemann-Hausdorff} volume form on a Finsler manifold $(M,F)$ is defined as:
  $dV_{BH}=\sigma_{BH}(x)dx$, where
 \begin{equation}
 \sigma_{BH}(x)=\frac{\emph{Vol} (B^n(1))}{\textnormal{Vol} \left\lbrace (\xi^i)\in T_xM: F(x,\xi)<1\right\rbrace}.
 \end{equation}
 \end{defn}

\vspace{0.1in}
\noindent
 The Busemann-Hausdorff volume form of the Randers metric can be explicitly given as follows:
 \begin{lem}[\cite{W}, \boldmath$\S 3$ ]\label{lem 3.A1}
 The Busemann-Hausdorff volume form of the Randers metric $F =\alpha + \beta$ is given by,
 \begin{equation}\label{eqn 3.A38}
 dV_{BH} =\sigma_{BH}(x)dV_\alpha= \left( 1-||\beta||^2_\alpha\right)^{\frac{n+1}{2}} dV_\alpha,
 \end{equation}
 where $dV_\alpha=\sqrt{\det (a_{ij})} \ dx$.
 \end{lem}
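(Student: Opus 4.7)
The plan is to compute $\sigma_{BH}(x)$ directly from its definition by finding the Euclidean volume of the Randers indicatrix $B_x := \{\xi \in T_xM : \alpha(x,\xi) + \beta(x,\xi) < 1\}$ and then comparing with $dV_\alpha = \sqrt{\det(a_{ij})}\,dx$. The key observation is that, despite being the sublevel set of a non-symmetric Minkowski norm, $B_x$ turns out to be an ellipsoid, so its volume can be evaluated in closed form.

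First, I would pass to an $\alpha$-orthonormal frame at $x$ by a linear change of variables $\eta = A\xi$ with $A^{T}A = (a_{ij})$, so that $\alpha(x,\xi) = |\eta|$ (the Euclidean norm) and the Jacobian satisfies $|\det A| = \sqrt{\det(a_{ij})}$. In these new variables, $\beta$ is represented by a vector $\tilde b$ with $|\tilde b|^{2} = a^{ij}b_ib_j = \|\beta\|_\alpha^{2} =: b^{2} < 1$. By an additional orthogonal rotation (which preserves Euclidean volume) I may assume $\tilde b = (b,0,\dots,0)$, so the indicatrix becomes
\begin{equation*}
\widetilde B = \bigl\{\eta\in\mathbb{R}^n : |\eta| + b\,\eta^{1} < 1\bigr\}.
\end{equation*}

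Next, on the region $\{b\eta^{1}<1\}$ (which automatically contains $\widetilde B$), I would square the inequality $|\eta| < 1 - b\eta^{1}$ to get
\begin{equation*}
(1-b^{2})(\eta^{1})^{2} + 2b\,\eta^{1} - 1 + (\eta^{2})^{2} + \cdots + (\eta^{n})^{2} < 0 .
\end{equation*}
Completing the square in $\eta^{1}$ converts this into the equation of an ellipsoid centered at $\bigl(-\tfrac{b}{1-b^{2}},0,\dots,0\bigr)$ with semi-axis $\tfrac{1}{1-b^{2}}$ along $\eta^{1}$ and semi-axes $\tfrac{1}{\sqrt{1-b^{2}}}$ along $\eta^{2},\dots,\eta^{n}$. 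Its Euclidean volume is therefore
\begin{equation*}
\mathrm{Vol}(\widetilde B) = \mathrm{Vol}(B^n(1)) \cdot \frac{1}{1-b^{2}} \cdot \frac{1}{(1-b^{2})^{(n-1)/2}} = \frac{\mathrm{Vol}(B^n(1))}{(1-b^{2})^{(n+1)/2}}.
\end{equation*}

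Finally, undoing the change of variables gives $\mathrm{Vol}(B_x) = \mathrm{Vol}(\widetilde B)/\sqrt{\det(a_{ij})}$, so by the definition of the Busemann--Hausdorff density,
\begin{equation*}
\sigma_{BH}(x) = \frac{\mathrm{Vol}(B^n(1))}{\mathrm{Vol}(B_x)} = \bigl(1-\|\beta\|_\alpha^{2}\bigr)^{(n+1)/2}\sqrt{\det(a_{ij})},
\end{equation*}
which yields the claimed identity $dV_{BH} = (1-\|\beta\|_\alpha^{2})^{(n+1)/2}\,dV_\alpha$. The only non-routine step is recognising the squared indicatrix as an ellipsoid and identifying its semi-axes correctly; everything else is a linear change of variables and a direct volume computation.
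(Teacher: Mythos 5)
Your argument is correct, and it is worth noting that the paper itself offers no proof of this lemma at all: it is simply quoted from Wu's article (\cite{W}, $\S 3$), so there is nothing in the text to compare against line by line. Your derivation is the standard self-contained one: passing to an $\alpha$-orthonormal frame turns $\alpha$ into the Euclidean norm with Jacobian $\sqrt{\det(a_{ij})}$, the rotation normalises $\tilde b=(b,0,\dots,0)$ with $b^2=\|\beta\|_\alpha^2$, and completing the square exhibits the indicatrix as an ellipsoid with one semi-axis $\tfrac{1}{1-b^2}$ and $n-1$ semi-axes $\tfrac{1}{\sqrt{1-b^2}}$, whence $\mathrm{Vol}(\widetilde B)=\mathrm{Vol}(B^n(1))(1-b^2)^{-(n+1)/2}$ and the claimed density follows from the definition of $\sigma_{BH}$. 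The only step that deserves one extra sentence is the squaring: you should record that the sublevel set $\{|\eta|<1-b\,\eta^1\}$ forces $1-b\,\eta^1>0$, and conversely that any $\eta$ satisfying the squared inequality with $1-b\,\eta^1\le 0$ would give $|\eta|<b\,\eta^1-1\le b|\eta|-1<|\eta|$, a contradiction since $b<1$; hence the squared region coincides exactly with the indicatrix and no spurious component is counted. With that remark added, your proof is complete and supplies a verification the paper leaves to the reference.
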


 \noindent For the \textit{Busemann-Hausdorff} volume form 
  $dV_{BH}=\sigma_{BH}(x)dx$ on Finsler manifold $(M,F)$, the \textit{distortion} $\tau$ is defined by (see, \cite{SSZ}, $\S 5.1$)
\begin{equation*}
\tau (x,\xi) :=\ln \frac{\sqrt{\det(g_{ij}(x,\xi))}}{\sigma _{BH}(x)}.
\end{equation*}
Now we define  $S$-curvature of the Finsler manifold $(M, F)$ with respect to the volume form $dV_{BH}$.
\begin{defn} [\textbf{S-curvature}, \cite{SSZ}, \boldmath$\S 5.1$]
 \textnormal{	For a vector $\xi\in T_xM\backslash \left\lbrace 0\right\rbrace$, let $\gamma=\gamma(t)$ be the geodesic with $\gamma(0)=x$ and $\dot{\gamma}(0)=\xi$.
 Then the $S$-curvature of the Finsler metric $F$ is defined by 	
\begin{equation*}
 S(x, \xi)=\frac{d}{dt}\left[ \tau \left(\gamma(t), \dot{\gamma}(t)\right) \right]|_{t=0}.
 \end{equation*}}
 \end{defn}
 The $S$-curvature of $F$ in  terms of spray coefficients is given by
 \begin{equation}\label{eqn2.1.15}
 S(x,\xi)=\frac{\partial G^m}{\partial \xi^m}-\xi^m\frac{\partial\left( \ln\sigma_{BH}\right) }{\partial x^m},
 \end{equation}
 where $G^m$ are given by  \eqref{eqn2.1.14}.

\begin{thm}\label{lem 3.1}
\textnormal{(\cite{DSSZ}, $\S 11.3$; \cite{SSZ}, $\S 3.4.8$)}
  If  $F=\alpha+\beta$  is a Randers metric on the manifold $M$ with $\beta$ a closed $1$-form, then the Finslerian geodesics have the same trajectories as the geodesics of the underlying Riemannian metric $\alpha$. Moreover, if  $(M, \alpha)$ has constant curvature, then $(M, F)$ is locally projectively flat and consequently, in this case 
 $(M, F)$  is  projectively equivalent to  $(M, \alpha)$. 
 \end{thm}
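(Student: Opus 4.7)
The plan is to reduce the claim to the well-known structure theorem for the spray of a Randers metric. First I would fix the Levi-Civita connection of $\alpha$, write $b_{i;j}$ for the $\alpha$-covariant derivative of $\beta$, and split it into its symmetric and antisymmetric parts
\[
r_{ij}=\tfrac{1}{2}(b_{i;j}+b_{j;i}),\qquad s_{ij}=\tfrac{1}{2}(b_{i;j}-b_{j;i}).
\]
The hypothesis $d\beta=0$ is exactly $b_{i;j}=b_{j;i}$, so $s_{ij}\equiv 0$, and in particular the mixed tensors $s^{i}{}_{j}=a^{ik}s_{kj}$ and $s^{i}{}_{0}=s^{i}{}_{j}\xi^{j}$ vanish identically.

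Next, I would substitute $F=\alpha+\beta$ into the spray formula \eqref{eqn2.1.14} and use the classical decomposition for a Randers metric, namely
\[
G^{i}_{F}=G^{i}_{\alpha}+P(x,\xi)\,\xi^{i}+Q^{i}(x,\xi),
\]
where $P$ is positively $1$-homogeneous in $\xi$ and $Q^{i}$ is built entirely from the skew part $s^{i}{}_{0}$. With $\beta$ closed, $Q^{i}\equiv 0$, leaving
\[
G^{i}_{F}=G^{i}_{\alpha}+P(x,\xi)\,\xi^{i}.
\]
Comparing with the geodesic equation \eqref{eqn2.1.13}, this shows that an $F$-geodesic and an $\alpha$-geodesic through the same point with the same initial direction coincide as point sets and differ only by a reparametrization, i.e.\ $(M,F)$ and $(M,\alpha)$ are projectively equivalent. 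This yields the first assertion.

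For the second assertion, I would apply Beltrami's theorem: a Riemannian manifold is locally projectively flat if and only if it has constant sectional curvature. Hence if $(M,\alpha)$ has constant curvature, around each point there exist local coordinates in which the $\alpha$-geodesics are straight line segments. Combining this with the projective equivalence just established, the $F$-geodesics are the same straight segments in those coordinates, so $(M,F)$ is locally projectively flat and projectively equivalent to $(M,\alpha)$, as claimed.

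The only genuine computation is the identification of the Randers spray decomposition and verification that the $s_{ij}$-terms are the only obstruction to writing $G^{i}_{F}-G^{i}_{\alpha}$ as a multiple of $\xi^{i}$; this is standard but tedious, and is the main technical point. Once it is in hand, both conclusions follow cleanly, the second being a direct consequence of Beltrami's theorem combined with the definition of projective equivalence.
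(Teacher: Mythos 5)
Your proposal is correct and follows essentially the same route as the cited sources (and as the machinery the paper itself records later in \eqref{eqnn 4.54}--\eqref{eqnn 4.55}): the spray decomposition $G^i=\bar G^i+P\xi^i+\alpha s^i_0$, the observation that $d\beta=0$ kills $s_{ij}$ and hence the non-projective term, and Beltrami's theorem for the constant-curvature case. The paper states this theorem as a known result without reproving it, so there is nothing in the text your argument diverges from; your reconstruction is the standard proof and is complete.
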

\begin{defn} [\text{Finsler structure for Funk metric in a bounded convex set} $\Omega \subset \mathbb{R}^n$  \cite{HHG}, \text{Chapter 3}, $\boldmath \S 3$]
    Let $\Omega \subset \mathbb{R}^n$ be a bounded convex set. The Finsler structure $F_{\Omega}$ for the Funk metric $d_{F,\Omega}$ on $\Omega$ is defined such that the unit ball in the tangent space $T_x\Omega$ at a point $x\in \Omega$ is the domain $\Omega$ itself, but with the point $x$ as the center. Thus, the  Finsler structure $F_{\Omega}$ is  defined as
    \begin{equation}\label{eqnn 4.2001}
     F_{\Omega}(x, \xi):= \inf  \left\lbrace t > 0 ~\vert  \left( x+\frac{\xi}{t} \right) \in   \Omega \right\rbrace,
    \end{equation}
    where $\xi \in T_x\Omega$.
       \end{defn}
    In the sequel, we restrict ourself to the discussion on dimension $2$ only.

\begin{proposition}\label{ppn4.2.1}
Let  $\mathbb{D}_E(r)=\left\lbrace x\in \mathbb{R}^2 : |x| < r   \right\rbrace \subset \mathbb{R}^2$ be the disc centred at origin and radius $r$ . Then the  Finsler structure for the Funk metric defined by \eqref{eqnn 4.2001} is given by,
 \begin{equation}
     F_{\mathbb{D}_E(r)}(x, \xi) =\frac{\sqrt{\left(r^2-|x|^2 \right) |\xi|^2+ \langle x , \xi \rangle ^2 }+ \langle x , \xi \rangle}{r^2-|x|^2}.
 \end{equation}
\end{proposition}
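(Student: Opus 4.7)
The plan is to apply the definition \eqref{eqnn 4.2001} directly. Since $\mathbb{D}_E(r)$ is the open disc of radius $r$, the infimum is attained precisely at the value $t_{\ast} > 0$ for which the ray $s \mapsto x + s\xi$ (with $s = 1/t$) exits the disc, i.e., at the unique positive scalar $t = t_\ast$ such that the point $x + \xi/t$ lies on the boundary circle $\partial \mathbb{D}_E(r) = \{y : |y| = r\}$. So the whole computation reduces to solving one quadratic equation and picking the correct root.

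First I would set up the boundary condition $\left|x + \xi/t\right|^2 = r^2$ and expand it using the Euclidean inner product to obtain
\begin{equation*}
|x|^2 + \frac{2\langle x,\xi\rangle}{t} + \frac{|\xi|^2}{t^2} = r^2 .
\end{equation*}
Next I would clear denominators by multiplying through by $t^2$, which yields the quadratic
\begin{equation*}
(r^2 - |x|^2)\, t^2 - 2\langle x,\xi\rangle\, t - |\xi|^2 = 0 .
\end{equation*}
Since $x \in \mathbb{D}_E(r)$, the leading coefficient $r^2 - |x|^2$ is strictly positive, so I can solve by the quadratic formula and obtain two real roots (the discriminant $4\langle x,\xi\rangle^2 + 4(r^2 - |x|^2)|\xi|^2$ is manifestly non-negative).

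The only remaining point is to justify the choice of sign. Because $r^2 - |x|^2 > 0$ and $\sqrt{\langle x,\xi\rangle^2 + (r^2-|x|^2)|\xi|^2} \geq |\langle x,\xi\rangle|$, the root with the minus sign is non-positive while the root with the plus sign is strictly positive; the latter corresponds geometrically to the forward intersection of the ray with $\partial \mathbb{D}_E(r)$. Taking this root gives
\begin{equation*}
t_\ast = \frac{\langle x,\xi\rangle + \sqrt{(r^2 - |x|^2)|\xi|^2 + \langle x,\xi\rangle^2}}{r^2 - |x|^2},
\end{equation*}
and I would conclude by noting that for $t > t_\ast$ the point $x + \xi/t$ lies strictly inside $\mathbb{D}_E(r)$ while for $t < t_\ast$ it lies outside, so $F_{\mathbb{D}_E(r)}(x,\xi) = t_\ast$, which is exactly the claimed formula. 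There is no real obstacle here; the only care required is the sign analysis to ensure one selects the positive root, which is the geometrically meaningful one.
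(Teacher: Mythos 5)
Your proposal is correct and follows essentially the same route as the paper: impose the boundary condition $\left|x+\xi/t\right|^2=r^2$, reduce to the quadratic $(r^2-|x|^2)t^2-2\langle x,\xi\rangle t-|\xi|^2=0$, and select the positive root. You merely spell out the sign analysis and the attainment of the infimum a bit more explicitly than the paper does.
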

\begin{proof} From equation \eqref{eqnn 4.2001}, the Finsler metric on  $\mathbb{D}_E(r)$ is given by
    \begin{equation}\label{eqnn8}
     F_{{\mathbb{D}_E(r)}}(x, \xi)= \inf  \left\lbrace t > 0 ~\vert  \left( x+\frac{\xi}{t} \right) \in   \mathbb{D}_E(r) \right\rbrace, ~\forall~ \xi \in T_x\mathbb{D}_E(r).
    \end{equation}
    And therefore,
 \begin{eqnarray*}\label{eqnn9}
  \left( x+\frac{\xi}{F_{\mathbb{D}_E(r)}(x, \xi)} \right) \in  \partial \mathbb{D}_E(r), 
  \end{eqnarray*}
  $$\mbox{i.e.,}~~~~~~~~~~~~~ |x+\frac{\xi}{F_{\mathbb{D}_E(r)}(x, \xi)}|^2=r^2 ,$$
  equivalently~
  $$(r^2-|x|^2)F^2_{\mathbb{D}_E(r)}-2\langle x , \xi \rangle F_{\mathbb{D}_E(r)}-|\xi|^2 =0,$$ 
  After solving the above quadratic equation  for $F$ and keeping in mind that $F$ is non negative, we have
  \begin{equation}\label{eqnn10}
     F_{\mathbb{D}_E(r)}=\frac{\sqrt{\left(r^2-|x|^2 \right) |\xi|^2+ \langle x , \xi \rangle ^2 }+ \langle x , \xi \rangle}{r^2-|x|^2}. 
  \end{equation}
 See for more detail \cite{HHG}, Chapter 3, $\S 3$, Example $3.3$.
 \end{proof}

\vspace{.3cm}
\begin{rem}
 Clearly if $r=1$,  then the Finsler metric on  $\mathbb{D}_E(1)$ is given by
\begin{equation}\label{eqnn101}
     F_{\mathbb{D}_E(1)}(x, \xi) =\frac{\sqrt{\left(1-|x|^2 \right) |\xi|^2+ \langle x , \xi \rangle ^2 }+ \langle x , \xi \rangle}{1-|x|^2}.
 \end{equation}   
\end{rem}

 \section{The Funk metric on the Klein unit disc $\mathbb{D}_K(1)$}
The Hilbert geometry is a natural geometry defined in an arbitrary convex subset of real affine space. In 1894, Hilbert discovered how to associate a length to each segment in a convex set by way of an elementary geometric construction and using the cross-ratio. The special case where the convex set is a ball, or more generally an ellipsoid, gives the Beltrami-Klein model of hyperbolic geometry. In this sense, Hilbert's geometry is a generalization of hyperbolic geometry.\\
 In what follows, we refer \cite{HHG}, Chapter 3, $\S 4$ for more details on the Hilbert geometry. \\

\noindent
Let $\mathbb{D}_E(1)=\left\lbrace x\in \mathbb{R}^2 : |x| < 1 \right\rbrace$ be the unit Euclidean disc. 
Then the Klein/Hilbert distance on  $\mathbb{D}_E(1)$ is given by
\begin{equation}\label{eqn A}
d_K(x,y):=\frac{1}{2}\log \left(\frac{|x-a||y-b|}{|y-a||x-b|} \right), 
\end{equation}
where $x,y \in \mathbb{D}_E(1)$ and $a=\overrightarrow{xy} \cap \partial \mathbb{D}_E(1), b=\overleftarrow{xy} \cap \partial \mathbb{D}_E(1)$, \\
The corresponding  Klein metric on  $\mathbb{D}_E(1)$ is the well-known Riemannian Klein  metric  and is given by 
 \begin{equation}\label{eqnn A}
  \alpha_K(x,\xi)=\frac{1}{2} \left( F_{\mathbb{D}_E(1)}(x, \xi)+ F_{\mathbb{D}_E(1)}(x, -\xi) \right)=\frac{\sqrt{\left(1-|x|^2 \right) |\xi|^2+ \langle x , \xi \rangle ^2 } }{1-|x|^2},
  \end{equation}
  where  $x=(x^1, x^2) \in \mathbb{D}_E(1), \xi \in T_x\mathbb{D}_E(1)$. 

\vspace{.3cm}
  \noindent
  In the sequel, we also denote $\alpha_K(x,\xi)$ by $||\xi||_K$.\\

  \noindent
It is well known that the Poincar\'e unit disc and Poincar\'e upper half-plane are two isometric models of hyperbolic plane geometry. The Poincar\'e metric $\alpha_U$ on the upper half plane $\mathbb{U}=\left\lbrace(x^1, x^2)\in \mathbb{R}^2 : x^2 > 0\right\rbrace,$   is  given by 
\begin{equation}\label{eqn 3.3}
\alpha_U(x,\xi)=\frac{|\xi|}{x^2},~\forall x=(x^1, x^2) \in \mathbb{U}, \xi=(\xi^1, \xi^2) \in T_x\mathbb{U}.
\end{equation}
\begin{proposition}\label{ppn 4.31}
    The Poincar\'e metric on the upper half plane $(\mathbb{U},\alpha_U)$ and the  Klein metric on the disc $(\mathbb{D}_E(1), \alpha_K)$ are Finsler isometric.
\end{proposition}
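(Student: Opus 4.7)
The plan is to exhibit an explicit diffeomorphism $\phi: \mathbb{U} \to \mathbb{D}_E(1)$ whose differential pulls back $\alpha_K$ to $\alpha_U$. Since both metrics are Riemannian (the Klein expression \eqref{eqnn A} is the square root of a positive-definite quadratic form in $\xi$), Finsler isometry here coincides with ordinary Riemannian isometry, and the pointwise identity $\alpha_K(\phi(x), d\phi_x(\xi)) = \alpha_U(x, \xi)$ is what must be established.

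First, I would factor $\phi$ as $\phi = \Psi \circ C$, where $C: \mathbb{U} \to \mathbb{D}_P(1)$ is the Cayley transform $C(z) = (z-i)/(z+i)$ in complex coordinates $z = x^1 + i x^2$, and $\Psi: \mathbb{D}_P(1) \to \mathbb{D}_E(1)$ is the classical map from the Poincaré to the Klein model, given by $\Psi(y) = 2y/(1+|y|^2)$. Both maps are diffeomorphisms onto their images, so $\phi$ is a diffeomorphism from $\mathbb{U}$ onto $\mathbb{D}_E(1)$. This factorization has the pedagogical advantage of isolating the two familiar isometry theorems of hyperbolic plane geometry: Cayley intertwines the upper half plane and Poincaré disc models, while $\Psi$ intertwines the Poincaré and Klein disc models.

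Next, I would verify the two isometry claims separately. For the Cayley step, a standard computation gives $1 - |C(z)|^2 = 4x^2/|z+i|^2$ and $|dC(z)\xi| = 2|\xi|/|z+i|^2$, and substituting into the Poincaré disc norm $\alpha_P(y,\eta) = 2|\eta|/(1-|y|^2)$ yields $\alpha_P(C(z), dC(\xi)) = |\xi|/x^2 = \alpha_U(x,\xi)$. For $\Psi$, I would use the key identity
$$1 - |\Psi(y)|^2 = \frac{(1-|y|^2)^2}{(1+|y|^2)^2},$$
together with the chain rule expression for $d\Psi_y \eta$, and insert these into the Klein formula \eqref{eqnn A}. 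After expanding the cross term $\langle \Psi(y), d\Psi(\eta)\rangle^2$, the radial and tangential contributions combine so that the expression under the square root simplifies to $4|\eta|^2(1-|y|^2)^2/(1+|y|^2)^4$, which, after dividing by $1 - |\Psi(y)|^2$, reproduces the Poincaré norm $2|\eta|/(1-|y|^2) = \alpha_P(y,\eta)$.

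The principal obstacle is the algebraic verification of $\Psi^*\alpha_K = \alpha_P$: since $d\Psi$ mixes the radial and tangential components of $\eta$, the $\langle x,\xi\rangle^2$ term in $\alpha_K^2$ must be expanded in full before the anisotropic pieces cancel. A more conceptual alternative is to invoke the classical fact that $(\mathbb{U}, \alpha_U)$ and $(\mathbb{D}_E(1), \alpha_K)$ are both simply connected, geodesically complete Riemannian surfaces of constant sectional curvature $-1$, hence isometric by the uniqueness of simply connected space forms. However, because later sections of the paper transfer the Funk-Finsler structure between models via an explicit map, the direct construction of $\phi = \Psi \circ C$ is the route best suited to the paper's goals.
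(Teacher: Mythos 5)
Your proof is correct, and it reaches the same goal by a different decomposition than the paper. The paper's proof simply quotes a single explicit diffeomorphism $g:(\mathbb{D}_E(1),\alpha_K)\to(\mathbb{U},\alpha_U)$, $g(x)=\bigl(\tfrac{2x^2}{1+x^1},\tfrac{2\sqrt{1-|x|^2}}{1+x^1}\bigr)$, taken from Mester--Krist\'aly, records its inverse, and asserts $g^*\alpha_U=\alpha_K$ without carrying out the pullback computation. You instead factor the isometry through the Poincar\'e disc, $\phi=\Psi\circ C$ with $C$ the Cayley transform and $\Psi(y)=2y/(1+|y|^2)$, and verify each factor separately; your key cancellation
\[
\bigl(1-|\Psi(y)|^2\bigr)\,|d\Psi_y(\eta)|^2+\langle \Psi(y),d\Psi_y(\eta)\rangle^2=\frac{4(1-|y|^2)^2|\eta|^2}{(1+|y|^2)^4}
\]
is correct and is exactly the step the paper leaves implicit behind the word ``clearly.'' Your route buys transparency and reuses two classical isometries (note that your $\Psi$ is precisely the map $f$ the paper later uses between $\mathcal{D}_P$ and $\mathcal{D}_K$), at the cost of producing a map normalized differently from the paper's $g$ (yours sends $i$ to the origin, theirs sends $2i$); since any isometry suffices for the proposition, and the specific $g$ is only needed later for the explicit formulas of Section 3.4, this discrepancy is harmless here. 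Your closing remark that the space-form uniqueness argument would also work, but that an explicit map is what the paper actually needs downstream, is a fair assessment.
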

\begin{proof}
  Let $g:(\mathbb{D}_E(1), \alpha_K) \rightarrow (\mathbb{U}, \alpha_U)$ be the isometry given by (cf. \cite{AMAK}, Theorem 2),
\begin{equation*}\label{eqn4.A23}
g(x)=\left( \frac{2x^2}{1+x^1}, \frac{2\sqrt{1-|x|^2}}{1+x^1}\right).
\end{equation*}
Then
  $g^{-1}:\mathbb{U} \rightarrow \mathbb{D}_E(1) $ is,
\begin{equation}\label{eqn4.A24}
g^{-1}(x)=\left( \frac{4-|x|^2}{4+|x|^2}, \frac{4x^1}{4+|x|^2}\right).
\end{equation}
Clearly,
\begin{equation*}
g^*\alpha_U(x,\xi)=\alpha_U(g(x),dg_x(\xi))=\alpha_K(x,\xi), \end{equation*} 
that is, \begin{equation*}
g^*\alpha_U=\alpha_K. \end{equation*} \end{proof}
\begin{rem} It is interesting to note that, although the Klein disc $\mathbb{D}_E(1)$ and the Poincar\'e  upper half plane $\mathbb{U}$ are Finsler isometric to each other via the map $g$, i.e., the derivative map $dg$ between the corresponding tangent spaces is norm preserving. However, these two spaces are {\it not conformal} in the Riemannian sense, i.e., the angle between two vectors is not preserved, and therefore, they are not isometric in the Riemannian sense. 
\end{rem}
\vspace{0.1cm}

\begin{proposition}\label{ppn 4.32}
     The unit disc $\mathbb{D}_K(1)$ centered at origin with respect to  Klein distance $d_K$ is identical to the  Euclidean disc $\mathbb{D}_E(r)$ centered at origin and radius $r=\left( \frac{e^2-1}{e^2+1}\right)$, as a set.
\end{proposition}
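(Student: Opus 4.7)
The plan is to compute the Klein distance $d_K(0,y)$ explicitly for an arbitrary point $y$ in $\mathbb{D}_E(1)$ and then invert the inequality $d_K(0,y) < 1$ to describe the Klein unit disc as a Euclidean disc.

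First, I would fix $y \in \mathbb{D}_E(1) \setminus \{0\}$ and locate the two boundary points $a = \overrightarrow{0y} \cap \partial \mathbb{D}_E(1)$ and $b = \overleftarrow{0y} \cap \partial \mathbb{D}_E(1)$. Since the Euclidean line through the origin and $y$ crosses $\partial \mathbb{D}_E(1)$ at the two antipodal points $\pm y/|y|$, one gets $a = y/|y|$ and $b = -y/|y|$. The four Euclidean lengths entering the cross-ratio in \eqref{eqn A} then simplify to
\begin{equation*}
|x-a| = 1, \quad |y-b| = 1+|y|, \quad |y-a| = 1-|y|, \quad |x-b| = 1,
\end{equation*}
where $x = 0$.

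Plugging these into \eqref{eqn A} gives
\begin{equation*}
d_K(0,y) = \tfrac{1}{2}\log\!\left(\frac{1+|y|}{1-|y|}\right),
\end{equation*}
which is a strictly increasing function of the Euclidean norm $|y|$ (and vanishes at $y=0$).

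The Klein unit disc centred at the origin is, by definition, $\mathbb{D}_K(1) = \{y \in \mathbb{D}_E(1) : d_K(0,y) < 1\}$. Using the monotonicity of the expression above, the inequality $\tfrac{1}{2}\log\frac{1+|y|}{1-|y|} < 1$ is equivalent to $\frac{1+|y|}{1-|y|} < e^2$, i.e.\ $|y| < \frac{e^2-1}{e^2+1}$. Hence $\mathbb{D}_K(1) = \mathbb{D}_E(r)$ as sets, with $r = \frac{e^2-1}{e^2+1}$. The only step requiring any care is the identification of $a$ and $b$ and the bookkeeping of the four Euclidean distances; everything else is elementary algebraic manipulation, so there is no real obstacle here.
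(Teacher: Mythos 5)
Your proof is correct and follows essentially the same route as the paper: both compute the cross-ratio for the pair $(0,y)$ using the boundary points $\pm y/|y|$, arrive at $d_K(0,y)=\tfrac{1}{2}\log\frac{1+|y|}{1-|y|}$, and invert the inequality to get $|y|<\frac{e^2-1}{e^2+1}$. Your version is slightly more explicit about identifying the two boundary points, but the argument is the same.
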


\begin{proof}
 By definition, the Klein unit disc centered at origin is given by,
\begin{eqnarray}
 \nonumber \mathbb{D}_K(1)  &=&\left\lbrace x\in \mathbb{R}^2 : d_k(0,x) < 1  \right\rbrace\\
 \nonumber   &=& \left\lbrace x\in \mathbb{R}^2 : d_K(0,x)=\frac{1}{2}\log \left(\frac{|0-m||x-\Bar{m}|}{|x-m||0-\Bar{m}|} \right) < 1  \right\rbrace\\
  \nonumber   &=& \left\lbrace x\in \mathbb{R}^2 : \frac{|m||x-\Bar{m}|}{|x-m||\Bar{m}|}  < e^2  \right\rbrace, ~~ \mbox{as}~ |m|=\Bar{m}=1 \\
 \nonumber    &=& \left\lbrace x\in \mathbb{R}^2 :  \frac{|x-\Bar{m}|}{|x-m|}  < e^2  \right\rbrace.
    \end{eqnarray}
    Since $|x-m|=1-|x|$, $|x-\Bar{m}|=1+|x|$,
 \begin{equation}
  \nonumber    \mathbb{D}_K(1)=\left\lbrace x\in \mathbb{R}^2 : |x| < r   \right\rbrace = \mathbb{D}_E(r),~~ \mbox{where} ~r =\left( \frac{e^2-1}{e^2+1}\right),
 \end{equation}
where $\mathbb{D}_E(r)$ represents the Euclidean disc centered at origin and radius $r$.
\end{proof}
\vspace{.5 cm}
\noindent

\noindent The Finsler structure for the Funk metric in a convex domain $\Omega$ of the upper half plane $\mathbb{U}$ is given by Papadopoulos and Yamada in \cite{PAYS}. In view of Proposition \ref{ppn 4.31}, $(\mathbb{U},\alpha_U)$  and $(\mathbb{D}_E(1),\alpha_K)$  are isometric, i.e., norm preserving, therefore,  the Finsler structure  $\mathcal{F}(x, \xi)$ on the Klein unit disc $\mathbb{D}_K (1)$ is given by: 

\begin{equation}\label{eqnn 4.001}
\mathcal{F}(x, \xi)=\sup \limits_{\pi \in \mathscr{P} } \frac{\cosh d_K(x, T(x,\xi, \pi))}{\sinh d_K(x, T(x,\xi,\pi))} ||\xi||_K.
\end{equation}

\noindent Here $x \in  \mathbb{D}_k(1)$,  $\xi \in  T_{\mathbb{D}_k(1)}$, $\pi$ is a supporting hyperplane on  $\partial \mathbb{D}_k(1)$, $\mathscr{P}$ is the collection of all supporting hyperplanes on  $\partial \mathbb{D}_k(1)$ and $T(x, \xi, \pi)=R_{x,\xi} \cap \pi$ where, $R_{x,\xi}$ denotes  the geodesic ray starting from $x$ with initial tangent vector $\xi$.\\
Since $\coth t$ is a decreasing function of $t$ on the interval $(0,\infty )$,  \eqref{eqnn 4.001} reduces to
\begin{equation}\label{eqnn 4.001A}
  \mathcal{F}(x, \xi)=  \frac{\cosh d_K(x,\mathfrak{a})}{\sinh d_K(x,\mathfrak{a})} ||\xi||_K,
\end{equation}
where $\mathfrak{a}=R_{x,\xi} \cap \partial \mathbb{D}_K(1)$. 
\begin{thm}\label{thm0.3.1}
    The Finsler structure of the Funk metric on the Klein unit disc $\mathbb{D}_K(1)$ is a Randers metric, given by  $\mathcal{F} =\alpha_F +\beta_F, $ where   $\displaystyle \alpha_F (x, \xi) =\frac{\sqrt{\left(r^2-|x|^2 \right) |\xi|^2+ \langle x, \xi \rangle ^2 }}{r^2-|x|^2}$ with $r =\left( \frac{e^2-1}{e^2+1}\right)$, is the Riemannian-Klein metric in the disc $\mathbb{D}_K(1)$ and  $\beta$ is a closed $1$-form given by
$\displaystyle \beta_F (x, \xi) =\frac{(1-r^2) \langle x ,\xi \rangle}{(r^2-|x|^2)(1-|x|^2)}$, \ for all $x \in \mathbb{D}_K(1),\ \xi \in T_x\mathbb{D}_K(1)$  .
\end{thm}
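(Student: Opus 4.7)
The plan is to substitute the reduced expression \eqref{eqnn 4.001A} for $\mathcal{F}$ into an explicit parametrization of the Klein geodesic ray and then carry out the cross-ratio calculation for $d_K(x,\mathfrak{a})$. First I would exploit the fact that in the Klein model geodesics are Euclidean straight lines, so the ray $R_{x,\xi}$ is simply $\gamma(t)=x+t\xi$, and by Proposition \ref{ppn 4.32} the boundary $\partial\mathbb{D}_K(1)$ coincides with the Euclidean circle $\{|z|=r\}$ where $r=(e^2-1)/(e^2+1)$. Solving $|\gamma(t)|^2=r^2$ and $|\gamma(t)|^2=1$ yields the three points of interest along the chord: $\mathfrak{a}\in\partial\mathbb{D}_K(1)$ and $m,\bar m\in\partial\mathbb{D}_E(1)$. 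Setting $q=\langle x,\xi\rangle$, $\tilde A=\sqrt{q^2+|\xi|^2(1-|x|^2)}$ and $\tilde B=\sqrt{q^2+|\xi|^2(r^2-|x|^2)}$, these parameters will be rational functions of $\tilde A,\tilde B,q,|\xi|^2$, and the Klein norm reads $\|\xi\|_K=\tilde A/(1-|x|^2)$.

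Next I would use the definition \eqref{eqn A} of $d_K$ to obtain $e^{2d_K(x,\mathfrak{a})}=R$ for a cross ratio $R$ expressible in terms of the four ordered distances along the line; the identity $\coth(\tfrac12\log R)=(R+1)/(R-1)$ then converts $\mathcal{F}$ into a rational function of $\tilde A,\tilde B,q,|x|^2$. After routine simplification (in which the differences of the chord-distances telescope), the expression should reduce to
\[
\mathcal{F}(x,\xi)\;=\;\frac{\tilde A^{2}-q\tilde B}{(\tilde B-q)(1-|x|^{2})}.
\]

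Now comes the step that makes the Randers form visible. The key algebraic identity is the ``Pythagorean'' relation
\[
\tilde A^{2}-\tilde B^{2}=(1-r^{2})|\xi|^{2},
\]
which rewrites the numerator as $\tilde B(\tilde B-q)+(1-r^{2})|\xi|^{2}$. Splitting the fraction and then rationalizing the remaining denominator $\tilde B-q$ against $\tilde B+q$ (using $\tilde B^{2}-q^{2}=|\xi|^{2}(r^{2}-|x|^{2})$) collapses the $|\xi|^{2}$ factors and yields
\[
\mathcal{F}(x,\xi)=\frac{\tilde B}{r^{2}-|x|^{2}}+\frac{(1-r^{2})\,q}{(r^{2}-|x|^{2})(1-|x|^{2})}\;=\;\alpha_F(x,\xi)+\beta_F(x,\xi),
\]
which is the claimed Randers decomposition.

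The anticipated main obstacle is precisely this last rationalization/splitting: one must spot the correct regrouping that lets $\tilde A$ cancel and produces clean coefficients matching $\alpha_F,\beta_F$; every naive expansion leads to unwieldy quartic expressions. Finally, closedness of $\beta_F$ is immediate because its coefficient vector $b_i(x)=\dfrac{(1-r^{2})x^{i}}{(r^{2}-|x|^{2})(1-|x|^{2})}$ is radial, so $\beta_F=d\Phi$ for a primitive $\Phi$ depending only on $|x|^{2}$; and the Randers admissibility condition $\|\beta_F\|_{\alpha_F}<1$ is automatic since $\mathcal{F}=\alpha_F+\beta_F>0$ on the slit tangent bundle, having come from a genuine Finsler norm.
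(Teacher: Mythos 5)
Your proposal is correct and follows essentially the same route as the paper: both reduce $\mathcal{F}$ to $\coth d_K(x,\mathfrak{a})\,\|\xi\|_K$, evaluate the cross-ratio along the Euclidean chord through the two concentric circles $|z|=r$ and $|z|=1$, and rearrange into $\alpha_F+\beta_F$; the paper merely packages your chord lengths $\tilde A\pm q$, $\tilde B\pm q$ as values of the Euclidean Funk support functions $F_{\mathbb{D}_E(1)}(x,\pm\xi)$ and $F_{\mathbb{D}_E(r)}(x,\xi)$, which makes the cancellation you obtain by rationalizing against $\tilde B+q$ happen automatically. Your intermediate identity $\mathcal{F}=(\tilde A^{2}-q\tilde B)/\bigl((\tilde B-q)(1-|x|^{2})\bigr)$ checks out, as does your shortcut for $\|\beta_F\|_{\alpha_F}<1$ via positivity of $\mathcal{F}(x,\pm\xi)$, which the paper instead verifies by computing $a^{ij}b_ib_j$ explicitly.
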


\begin{proof}
 In view of \eqref{eqnn 4.001A} and \eqref{eqn A}, the Finsler structure of the Funk metric on the Klein unit disc $\mathbb{D}_K(1)$ is given by 
 \begin{eqnarray}
\label{16}  \mathcal{F}(x, \xi)=\frac{|x-m|\cdot|\mathfrak{a}-\bar{m}|+|x-\bar{m}|\cdot|\mathfrak{a}-m|}{|x-m|\cdot|\mathfrak{a}-\bar{m}|-|x-\bar{m}|\cdot|\mathfrak{a}-m|}||\xi||_K,
 \end{eqnarray}
 where $m=R_{x,\xi} \cap \partial \mathbb{D}_E(1), and 
  ~\bar{m}= R_{x, -\xi}  \cap \partial \mathbb{D}_E(1)$. Here $R_{x, -\xi}$ denotes the geodesic ray starting from $x$ with initial tangent vector $- \xi$. \\
  In view of of \eqref{eqnn 4.2001} we obtain,
\begin{equation}\label{4.3.A}
 x+\frac{\xi}{F_{\mathbb{D}_E(1)}(x,\xi)}=m,~ x-\frac{\xi}{F_{\mathbb{D}_E(1)}(x,-\xi)}=\bar{m},~ x+\frac{\xi}{F_{\mathbb{D}_K(1)}(x,\xi)}=\mathfrak{a}.
\end{equation} 
 Clearly, $F_{\mathbb{D}_K(1)}(x,\xi) > F_{\mathbb{D}_E(1)}(x,\xi)$. This yields that
 \begin{equation}\label{4.3.AA}
|\mathfrak{a}-m|=\frac{\left( F_{\mathbb{D}_K(1)}(x,\xi)-F_{\mathbb{D}_E(1)}(x,\xi)\right) |\xi|}{F_{\mathbb{D}_K(1)}(x,\xi)\cdot F_{\mathbb{D}_E(1)}(x,\xi)}, 
 \end{equation} 
 and 
  \begin{equation}\label{4.3.AAA}
 |\mathfrak{a}-\bar{m}|=\frac{\left( F_{\mathbb{D}_K(1)}(x,\xi)+F_{\mathbb{D}_E(1)}(x,-\xi)\right) |\xi|}{F_{\mathbb{D}_k(1)}(x,\xi)\cdot F_{\mathbb{D}_E(1)} (x,-\xi)}.
 \end{equation} 
Substituting \eqref{4.3.A}, \eqref{4.3.AA}, \eqref{4.3.AAA} in \eqref{16}, we have
 \begin{equation*}
     \mathcal{F}(x, \xi)=\frac{2F_{\mathbb{D}_k(1)}(x,\xi)-[F_{\mathbb{D}_E(1)}(x,\xi)-F_{\mathbb{D}_E(1)}(x,-\xi)]}{F_{\mathbb{D}_E(1)}(x,\xi)+F_{\mathbb{D}_E(1)}(x,-\xi)}||\xi||_K. 
 \end{equation*}
Further, considering   \eqref{eqnn10}, \eqref{eqnn101}  and \eqref{eqnn A}, we obtain 
 \begin{eqnarray}
 \nonumber \mathcal{F}(x, \xi) &=&\frac{\sqrt{\left(r^2-|x|^2 \right) |\xi|^2+ \langle x , \xi \rangle ^2 }}{r^2-|x|^2}+\frac{(1-r^2) \langle x ,\xi \rangle}{(r^2-|x|^2)(1-|x|^2)}\\
\label{eqn 4.31} &=& \alpha_F (x, \xi)+\beta_F(x, \xi).
 \end{eqnarray}
  If we write $\alpha_F (x, \xi)=\sqrt{a_{ij}\xi^i\xi^j}$ and $\beta_F(x, \xi)  =b_i(x)\xi^i$, then 
  
  \begin{equation}\label{eqn2.5.118}
   (a_{ij})=\frac{1}{(r^2-|x|^2)^2}
  \begin{pmatrix}
   r^2-|x|^2+(x^1)^2& x^1x^2 \\
  x^1x^2 & r^2-|x|^2+(x^2)^2
  \end{pmatrix},
  \end{equation}
   $\det(a_{ij})=\frac{r^2}{(r^2-|x|^2)^3}$ and its inverse matrix is given by 
  \begin{equation}\label{eqn2.5.119}
(a^{ij})=(a_{ij})^{-1}= \frac{(r^2-|x|^2)}{r^2 } 
  \begin{pmatrix}
     r^2-|x|^2+(x^2)^2& -x^1x^2 \\
    -x^1x^2 & r^2-|x|^2+(x^1)^2
    \end{pmatrix},
  \end{equation}
 and the coefficients $b_i(x)$ of the $1$-form $\beta_F$  is given by
 \begin{equation}\label{eqn2.5.120}
b_i(x)=\frac{(1-r^2) x^i }{(r^2-|x|^2)(1-|x|^2)}, 
 \end{equation}
 and hence \begin{equation}\label{eqn2.5.121}
||\beta_F||^2_{\alpha_F}=a^{ij}b_ib_j=\frac{|x|^2(1-r^2)^2}{r^2(1-|x|^2)^2}  < 1.
\end{equation}
\noindent It is easy to observe that   $\beta_F=df(x)$, where $f(x)=\frac{1}{2}\log\left(\frac{1-|x|^2}{r^2-|x|^2} \right).$
Thus $\mathcal{F}(x, \xi)=\alpha_F (x, \xi)+\beta_F(x, \xi)$  is a Randers metric with a closed $1$-form $\beta$.
\end{proof}

\begin{rem} The Finsler structure defined in 
     Theorem \ref{thm0.3.1} has same geodesics as geodesics of Riemannian Klein unit disc as a point set, by  Theorem \ref{lem 3.1}.  In particular, the geodesics as a point set are lines in the Finsler Klein disc described in Theorem \ref{thm0.3.1}. 
\end{rem}

\vspace{.5cm}
\noindent Now the distance induced by the Funk-Finsler structure $\mathcal{F}$ is given by
\begin{equation}\label{AS}
d(x, y)=\inf \limits_{\sigma} \int_{a}^{b} \mathcal{F}(\sigma(t), \dot{\sigma}(t)) \,dt ,
\end{equation}
 where the infimum is being taken over all  piecewise $C^1$-curves $\sigma$ in $\mathbb{D}_K(1)$ with $\sigma(a)=x$ and $\sigma(b)=y$.
 \begin{proposition}
 The Funk distance  $d_1(x, y)$, for the Klein unit disc $\mathbb{D}_K(1)$, can be recovered from the distance formula \eqref{AS} of its Finsler structure, where
 \begin{equation*}
d_1(x, y):=\log\frac{\sinh d_K(x,\mathfrak{a})}{\sinh d_K(y,\mathfrak{a})},
\end{equation*}
where $x,y \in \mathbb{D}_K(1)$ and  $\mathfrak{a}=\overrightarrow{xy} \cap \partial \mathbb{D}_K(1)$. 
 \end{proposition}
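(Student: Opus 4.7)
The plan is to evaluate the length integral \eqref{AS} along the Finsler-geodesic from $x$ to $y$ and show that the resulting value coincides with $d_1(x,y)$.

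\textbf{Step 1 (identify the minimizing curve).} By Theorem \ref{lem 3.1}, since $\beta_F = df$ is closed and the underlying Riemannian metric $\alpha_F$ is the Klein metric of constant sectional curvature, the Finsler metric $\mathcal{F}=\alpha_F+\beta_F$ is projectively equivalent to $\alpha_K$. Consequently, the trajectory minimizing \eqref{AS} between $x$ and $y$ is the same point set as the Klein (i.e.\ straight Euclidean) geodesic segment joining $x$ to $y$; in particular it lies on the Euclidean ray $\overrightarrow{xy}$ and therefore passes through the boundary point $\mathfrak{a}=\overrightarrow{xy}\cap \partial\mathbb{D}_K(1)$.

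\textbf{Step 2 (parametrize by Klein arc length).} Let $\sigma:[0,L]\to \mathbb{D}_K(1)$ be this segment, parametrized by Klein arc length, so that $\sigma(0)=x$, $\sigma(L)=y$ with $L=d_K(x,y)$ and $\|\dot\sigma(s)\|_K=1$. Because $\sigma(s)$, $y$ and $\mathfrak{a}$ all lie on the same Klein geodesic ray (with $\dot\sigma(s)$ pointing toward $\mathfrak{a}$), the boundary point determined by $(\sigma(s),\dot\sigma(s))$ is $\mathfrak{a}$ itself, independent of $s$, and one has the collinearity relation
\begin{equation*}
d_K(\sigma(s),\mathfrak{a}) \;=\; d_K(x,\mathfrak{a}) - s.
\end{equation*}

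\textbf{Step 3 (plug in the Finsler structure and integrate).} Applying formula \eqref{eqnn 4.001A} along $\sigma$ gives
\begin{equation*}
\mathcal{F}(\sigma(s),\dot\sigma(s)) \;=\; \coth\!\bigl(d_K(\sigma(s),\mathfrak{a})\bigr)\,\|\dot\sigma(s)\|_K \;=\; \coth\!\bigl(d_K(x,\mathfrak{a})-s\bigr).
\end{equation*}
Substituting into \eqref{AS} and changing variables via $u=d_K(x,\mathfrak{a})-s$, one obtains
\begin{equation*}
d(x,y) \;=\; \int_{0}^{L}\coth\!\bigl(d_K(x,\mathfrak{a})-s\bigr)\,ds \;=\; \int_{d_K(y,\mathfrak{a})}^{d_K(x,\mathfrak{a})}\coth u\,du \;=\; \log\frac{\sinh d_K(x,\mathfrak{a})}{\sinh d_K(y,\mathfrak{a})},
\end{equation*}
which is exactly $d_1(x,y)$.

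\textbf{Anticipated obstacle.} The only non-routine issue is the clean identification of the minimizer in Step 1 together with the verification that the boundary point $\mathfrak{a}$ is indeed constant along the minimizer; both are handled by Theorem \ref{lem 3.1} (closedness of $\beta_F$) and the Klein-model fact that the Finsler/Klein geodesic coincides with the Euclidean chord through $\mathfrak{a}$. Once this is in place, the rest of the argument is the elementary hyperbolic-arc-length computation above.
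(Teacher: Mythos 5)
Your proposal is correct and follows essentially the same route as the paper: evaluate the Finsler length along the Klein geodesic from $x$ toward $\mathfrak{a}$ and integrate $\coth$ of the remaining hyperbolic distance. In fact you supply the explicit parametrization and integral computation that the paper's proof only asserts, and your Step 1 (exactness of $\beta_F$ guaranteeing that the Klein chord is the minimizer) makes precise a point the paper leaves implicit.
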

 \begin{proof}
 For a given pair of points $x$ and $y$ in $\mathbb{D}_K(1)$, let $\textnormal{exp}{_x} t\xi_{xy}$ be the Klein geodesic ray from $x$ through $y$. Let $\mathfrak{a}(x,y)=T(x, \xi_{xy}, \pi_{\mathfrak{a}(x,y)})$ be the point where this geodesic ray hits the boundary $\partial \mathbb{D}_K(1)$ (hitting in the sense of  Euclidean case). Then the   $F$-length of the curve of $\sigma$ from $x$ to $y$ is
 \begin{equation*}
 L(\sigma)= \int_{a}^{b} \mathcal{F}(\sigma(t), \dot{\sigma}(t)) \,dt =\log\frac{\sinh d_K(x,\mathfrak{a})}{\sinh d_K(y,\mathfrak{a})}.
 \end{equation*}
 Thus, $d(x, y)=d_1(x, y)$.
 \end{proof}

\subsection{The Realisation of the Funk metric on the Klein unit disc $\mathbb{D}_K(1)$ via the upper sheet of the hyperboloid of two sheets }

In this subsection, we  {\it construct} a non-positive definite Randers metric on the upper half space and show that its pullback on the upper sheet of the hyperboloid of two sheets is the realization of the Funk metric on the Klein unit disc.\\
  
 \noindent 
 Let $\mathbb{R}_+^3 = \left\lbrace(\tilde{x}^1,\tilde{x}^2, \tilde{x}^3)\in \mathbb{R}^3 : \tilde{x}^3 > 0\right\rbrace$, be the 
 the upper half space equipped with the Lorentzian  metric $\alpha_L$ defined by $\alpha_L(\tilde{x},\tilde{\xi})=  \sqrt{(\tilde{\xi}^1)^2+(\tilde{\xi}^2)^2-(\tilde{\xi}^3)^2}$ with $\tilde{x}\in \mathbb{R}_+^3 $ and $\tilde{\xi} \in T_{\tilde{x}}\mathbb{R}_+^3\cong \mathbb{R}^3$. 
 Now consider the deformation  of $\alpha_L$ by  the $1$-form
 $$\displaystyle \beta_L=\left(1-\frac{r^2}{(\tilde{x}^3)^2-r^2\left((\tilde{x}^1)^2+(\tilde{x}^2)^2\right)} \right) \frac{1}{\tilde{x}^3}d\tilde{x}^3$$ in $\mathbb{R}_+^3$ as follows: 
  \begin{equation}\label{eqn3.1222}
  F_L(\tilde{x},\tilde{v}) =\alpha_L(\tilde{x},\tilde{\xi})+\beta_L(\tilde{x},\tilde{\xi}).\end{equation}
  We call $F_L$ as  {\it Lorentz-Randers} metric as it is not {\it positive definite} metric on its domain.\\
\noindent  Now  we parametrize the upper half portion of the hyperboloid of two sheets $$\mathbb{H_+}=\left\lbrace(\tilde{x}^1,\tilde{x}^2, \tilde{x}^3)\in \mathbb{R}_+^3 : (\tilde{x}^3)=\sqrt{1+(\tilde{x}^1)^2+(\tilde{x}^2)^2} \right\rbrace$$
   in $\mathbb{R}^3$ as:
\begin{equation}\label{eqn 3.A2}
\eta : \mathbb{D}_E(r) \subset \mathbb{R}^2 \rightarrow \mathbb{H}_+\subset \mathbb{R}_+^3,~~~ \eta(x)=\left( \frac{x}{\sqrt{r^2-|x|^2}},\frac{r}{\sqrt{r^2-|x|^2}}\right).
\end{equation}
where $x=(x^1,x^2) \in \mathbb{D}_E(r)$. Note that $\eta$ is a smooth diffeomorphism between $\mathbb{D}_E(r)$ and $\mathbb{H_+}$. 
\begin{proposition}
The pullback of the Lorentz-Randers metric $F_L$ defined as above, on the upper sheet of the hyperboloid of two sheets by the map $\eta$ is the realization of the Finsler-Funk metric on the Klein unit disc in Klein disc model of hyperbolic geometry, that is,  $\eta^*F_L(x,\xi)=  \mathcal{F}(x,\xi)$ for all $(x,\xi) \in T \mathbb{D}_K(1)$. 
\end{proposition}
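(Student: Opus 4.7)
The plan is a direct computation: pull back each piece $\alpha_L$ and $\beta_L$ of $F_L$ through the parametrization $\eta$ and identify the result with the corresponding piece $\alpha_F$ and $\beta_F$ of $\mathcal{F}$ from Theorem \ref{thm0.3.1}.

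First I would compute the differential $d\eta_x(\xi)$ explicitly. Writing $\eta(x)=(\tilde{x}^1,\tilde{x}^2,\tilde{x}^3)$ with $\tilde{x}^i = x^i/\sqrt{r^2-|x|^2}$ for $i=1,2$ and $\tilde{x}^3 = r/\sqrt{r^2-|x|^2}$, differentiation gives
\[
d\eta^i(\xi)=\frac{\xi^i(r^2-|x|^2)+x^i\langle x,\xi\rangle}{(r^2-|x|^2)^{3/2}},\qquad d\eta^3(\xi)=\frac{r\langle x,\xi\rangle}{(r^2-|x|^2)^{3/2}}.
\]
The $\beta_L$-part is the cleaner of the two. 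The scalar coefficient of $d\tilde{x}^3/\tilde{x}^3$ in $\beta_L$ only involves the combination $(\tilde{x}^3)^2-r^2((\tilde{x}^1)^2+(\tilde{x}^2)^2)$, and on the image of $\eta$ this collapses to $r^2(1-|x|^2)/(r^2-|x|^2)$; pushing this through algebraically yields the coefficient $(1-r^2)/(1-|x|^2)$, and combining with $1/\tilde{x}^3=\sqrt{r^2-|x|^2}/r$ and $d\eta^3(\xi)$ above produces exactly $\beta_F(x,\xi)=(1-r^2)\langle x,\xi\rangle / [(r^2-|x|^2)(1-|x|^2)]$.

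The $\alpha_L$-part requires expanding $(d\eta^1(\xi))^2+(d\eta^2(\xi))^2-(d\eta^3(\xi))^2$. The key algebraic step is that the three cross-terms in $\langle x,\xi\rangle^2$ arising from the sum combine with the $-r^2\langle x,\xi\rangle^2$ from the Lorentzian minus sign to give a factor $(r^2-|x|^2)$, so the whole numerator factors as $(r^2-|x|^2)\bigl[(r^2-|x|^2)|\xi|^2+\langle x,\xi\rangle^2\bigr]$, with denominator $(r^2-|x|^2)^3$. Taking square roots yields precisely $\alpha_F(x,\xi)$; in particular, positivity of the Lorentzian radicand is automatic on the image of $\eta$, so the Lorentz-Randers metric pulls back to a genuine Finsler norm. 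Adding the two pieces finishes the identification $\eta^*F_L=\mathcal{F}$.

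The main obstacle is purely algebraic, namely keeping the bookkeeping clean in the cancellation that reduces the Lorentzian expression $(d\eta^1)^2+(d\eta^2)^2-(d\eta^3)^2$ to the Klein-Riemannian form; once that factorization is performed, the match with the expressions \eqref{eqn 4.31} for $\alpha_F$ and $\beta_F$ derived in Theorem \ref{thm0.3.1} is immediate. No appeal to the projective or curvature theory is needed, so the proof is essentially a coordinate computation that also serves to motivate the precise form chosen for $\beta_L$.
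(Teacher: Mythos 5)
Your proposal is correct and follows essentially the same route as the paper: compute $d\eta_x$ explicitly, pull back $\alpha_L$ and $\beta_L$ separately, and verify via the factorization $(d\eta^1)^2+(d\eta^2)^2-(d\eta^3)^2=\bigl[(r^2-|x|^2)|\xi|^2+\langle x,\xi\rangle^2\bigr]/(r^2-|x|^2)^2$ together with the simplification of the coefficient of $d\tilde{x}^3/\tilde{x}^3$ that the two pieces equal $\alpha_F$ and $\beta_F$ respectively. The algebraic details you sketch (the collapse of $(\tilde{x}^3)^2-r^2((\tilde{x}^1)^2+(\tilde{x}^2)^2)$ to $r^2(1-|x|^2)/(r^2-|x|^2)$ and the cancellation producing the factor $(r^2-|x|^2)$) all check out.
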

\begin{proof}
First we find $\eta^*F_L(x,\xi)$, for $x\in \mathbb{D}_K(1)$ and $\xi \in T_x \mathbb{D}_K(1) \cong \mathbb{R}^2$.
 We have by \eqref{eqn 3.A2},\\
  $$\eta^1(x) = \frac{x^1}{\sqrt{r^2-|x|^2}} , ~ \eta^2(x) = \frac{x^2}{\sqrt{r^2-|x|^2}}  \; \mbox{and} \; \eta^3(x) = \frac{r}{\sqrt{r^2-|x|^2}}.$$
 Therefore,
\begin{equation*}
d\eta^1_x=\frac{1}{(r^2-|x|^2)^\frac{3}{2}}\left[ \left\lbrace r^2-|x|^2+(x^1)^2\right\rbrace dx^1 + x^1x^2 dx^2\right],
\end{equation*}
\begin{equation*}
d\eta^2_x=\frac{1}{(r^2-|x|^2)^\frac{3}{2}}\left[  x^1x^2 dx^1 + \left\lbrace r^2 -|x|^2+ (x^2)^2\right\rbrace dx^2\right], 
\end{equation*}
\begin{equation*}
d\eta^3_x =\frac{r}{(r^2-|x|^2)^\frac{3}{2}}\left[  x^1 dx^1 +x^2 dx^2\right].
\end{equation*}
Consequently,
\begin{equation*}
\begin{split}
\eta^*\alpha_L(x,\xi)=\Big[ \sqrt{(d\eta^1_x)^2+(d\eta^2_x)^2-(d\eta^3_x)^2}\Big](\xi)\\
=\frac{\sqrt{\left(r^2-|x|^2 \right)|\xi|^2+\langle x , \xi \rangle ^2 }}{r^2-|x|^2}=\alpha_F(x,\xi),
\end{split}
\end{equation*}
and
\begin{eqnarray}
\nonumber    \eta^*\beta_L(x,\xi)&=&\Bigg[\left(1-\frac{r^2}{(\eta^3(x))^2-r^2\left((\eta^1(x))^2+(\eta^2(x))^2\right)} \right)\frac{1}{\eta^3(x)}d\eta^3_x\Bigg](\xi)\\
\nonumber &=&\frac{(1-r^2) \langle x ,\xi \rangle}{(r^2-|x|^2)(1-|x|^2)}=\beta_F(x,\xi).
\end{eqnarray}

Hence,
\begin{equation}\label{eqn4.301}
\begin{split}
\eta^*F_L(x,\xi)=\eta^*\alpha_L(x,\xi)+\eta^*\beta_L(x,\xi) =\mathcal{F}(x,\xi).
\end{split}
\end{equation}
\end{proof}

\begin{rem} 
It is interesting to note that the pullback of the Lorentz-Randers metric as obtained in  \eqref{eqn4.301} is indeed a \textit{legitimate positive definite} Randers metric. Although  Lorentz-Randers metric itself on $\mathbb{R}_+^3$  is not a positive metric.
\end{rem}

\subsection{The Realisation of the Funk metric on the Klein unit disc $\mathbb{D}_K(1)$ via the upper hemisphere with radius $r$ }\label{Sec 3.6}
Let $C= \left\lbrace(\tilde{x}^1,\tilde{x}^2, \tilde{x}^3)\in \mathbb{R}_+^3 : (\tilde{x}^1)^2+(\tilde{x}^2)^2 < 1\right\rbrace  $ be an open subset in the upper half space with the hyperbolic metric $\alpha_+$, defined by $\displaystyle \alpha_+(x)=\frac{\sqrt{(d\tilde{x}^1)^2+(d\tilde{x}^2)^2+(d\tilde{x}^3)^2}}{\tilde{x}^3}$ with $\tilde{x}=(\tilde{x}^1,\tilde{x}^2, \tilde{x}^3)\in C $ and $\tilde{\xi}=(\tilde {\xi}^1,\tilde{\xi}^2, \tilde{\xi}^3) \in T_{\tilde{x}}C\cong \mathbb{R}^3$. Now let us consider the deformation of the metric $\alpha_+$ by the one form $\displaystyle \beta_+=b(x)d\tilde{x}^3$, where $b(x)=\left(\frac{ |\tilde{x}|^2-1}{\tilde{x}^3\left( 1-(\tilde{x}^1)^2-(\tilde{x}^2)^2\right)}\right)$ as follows:
\begin{equation}\label{eqn 3.5}
F_+(\tilde{x},\tilde{\xi})=\alpha_+ (\tilde{x},\tilde{\xi})+ \beta_+(\tilde{x},\tilde{\xi}) = \frac{|\tilde{\xi}|}{\tilde{x}^3}+ b(x)\tilde{\xi},
\end{equation}
where $\tilde{x}=(\tilde{x}^1,\tilde{x}^2, \tilde{x}^3)\in \mathbb{R}_+^3 $ and $\tilde{\xi}=(\tilde {\xi}^1,\tilde{\xi}^2, \tilde{\xi}^3) \in T_{\tilde{x}}\mathbb{R}_+^3\cong \mathbb{R}^3$. It is easy to show that $F_+$ is a Randers metric on the set $C$. \\

\noindent
Let us consider the map 
\begin{equation*}
\Psi:\mathbb{D}_K(1) \subset \mathbb{R}^2  \rightarrow \mathbb{S}_+^2=\left\lbrace(\tilde{x}^1,\tilde{x}^2, \tilde{x}^3)\in \mathbb{R}_+^3 : (\tilde{x}^1)^2+(\tilde{x}^2)^2 +(\tilde{x}^3)^2 = r^2\right\rbrace    
\end{equation*}
 given by 
 \begin{equation*}\label{eqn 3.A4}
\Psi(x^1, x^2)=\left(x^1, x^2, \sqrt{r^2-|x|^2} \right),
\end{equation*}
 which represents the immersion of the upper hemisphere in $C$. It is well known that the pullback of the hyperbolic metric on the upper sphere by the map $\Psi$ is the well known Klein metric on the Klein unit disc $\mathbb{D}_K(1)$. In fact, here we show that the pullback of the Randers metric \eqref{eqn 3.5} on the upper hemisphere is the realization of the Funk metric on the Klein unit disc $\mathbb{D}_K(1)$.

\vspace{.3cm}
\noindent
\begin{proposition}
 The pullback of the metric $F_+$ defined as above, on the upper hemisphere with radius $r$ by the map $\Psi$ is the realization of the Funk metric on the Klein unit disc $\mathbb{D}_K(1)$, that is,  $\Psi^*F_+(x,\xi)= \mathcal{F}(x,\xi) $ for all $(x, \xi) \in T\mathbb{D}_K(1)$.
\end{proposition}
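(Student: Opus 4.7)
The plan is to compute $\Psi^*\alpha_+$ and $\Psi^*\beta_+$ separately and recognize them as the $\alpha_F$ and $\beta_F$ from Theorem \ref{thm0.3.1}, whence $\Psi^*F_+ = \alpha_F + \beta_F = \mathcal{F}$.

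First I would write out $\Psi(x) = (x^1, x^2, \sqrt{r^2-|x|^2})$ and differentiate, observing that $d\tilde{x}^1 = dx^1$, $d\tilde{x}^2 = dx^2$, while
\begin{equation*}
d\tilde{x}^3_x(\xi) = -\frac{\langle x,\xi\rangle}{\sqrt{r^2-|x|^2}}.
\end{equation*}
This gives, for any $\xi \in T_x\mathbb{D}_K(1)$,
\begin{equation*}
|d\Psi_x(\xi)|^2 = (\xi^1)^2 + (\xi^2)^2 + \frac{\langle x,\xi\rangle^2}{r^2-|x|^2} = \frac{(r^2-|x|^2)|\xi|^2 + \langle x,\xi\rangle^2}{r^2-|x|^2}.
\end{equation*}
Since on the image $\tilde{x}^3\circ\Psi = \sqrt{r^2-|x|^2}$, dividing by $\tilde{x}^3$ produces $\alpha_F(x,\xi)$ exactly as in Theorem \ref{thm0.3.1}.

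For the one-form part I would use that on $\mathbb{S}^2_+(r)$ we have $|\tilde{x}|^2 = r^2$, so the factor $|\tilde{x}|^2 - 1$ in $b(\tilde{x})$ becomes the constant $r^2 - 1$, while $(\tilde{x}^1)^2 + (\tilde{x}^2)^2 = |x|^2$. Therefore
\begin{equation*}
b(\Psi(x)) = \frac{r^2 - 1}{\sqrt{r^2-|x|^2}\,(1-|x|^2)},
\end{equation*}
and combining with $d\tilde{x}^3_x(\xi) = -\langle x,\xi\rangle/\sqrt{r^2-|x|^2}$ yields
\begin{equation*}
\Psi^*\beta_+(x,\xi) = \frac{(1-r^2)\,\langle x,\xi\rangle}{(r^2-|x|^2)(1-|x|^2)} = \beta_F(x,\xi).
\end{equation*}
Adding the two pieces gives $\Psi^*F_+(x,\xi) = \alpha_F(x,\xi) + \beta_F(x,\xi) = \mathcal{F}(x,\xi)$.

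I do not foresee a serious obstacle here; the argument is a direct computation entirely analogous to the hyperboloid case already handled in the preceding proposition. The only mild care required is bookkeeping the sign coming from $d\tilde{x}^3$ (which is negative because $\tilde{x}^3$ decreases as $|x|$ increases) so that the factor $r^2 - 1 < 0$ combines with it to reproduce the $(1-r^2)$ in $\beta_F$, and remembering to evaluate $|\tilde{x}|^2$ on the image sphere rather than treating it as a free variable in $\mathbb{R}^3_+$.
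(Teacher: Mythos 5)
Your computation is correct and is exactly the calculation the paper carries out (the paper simply states the result of pulling back $\alpha_+$ and $\beta_+$ by $\Psi$ without showing the intermediate steps you supply). The two points you flag — the sign of $d\tilde{x}^3$ cancelling against $r^2-1<0$, and evaluating $|\tilde{x}|^2=r^2$ on the image hemisphere — are indeed the only places where care is needed, and you handle both correctly.
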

\begin{proof} In view of  \eqref{eqn 3.5} and \eqref{eqn 3.A4}, we have for all $ (x, \xi) \in T\mathbb{D}_K(1)$,

\begin{eqnarray}
\nonumber    \Psi^*F_+(x,\xi)&=&F_+(\Psi(x), d\Psi_x(\xi))\\
\nonumber  &=&\frac{\sqrt{\left(r^2-|x|^2 \right) |\xi|^2+ \langle x , \xi \rangle ^2 }}{r^2-|x|^2}+\frac{(1-r^2) \langle x ,  \xi \rangle}{(r^2-|x|^2)(1-|x|^2)}\\
\nonumber  &=& \mathcal{F}(x,\xi).
\end{eqnarray}
\end{proof}
\subsection{The Funk-Finsler metric on the Poincar\'e unit disc in the Poincar\'e  disc $\mathcal{D}_{P}$}
In this section, 
with the help of the pullback of the Funk structure of the unit disc $\mathbb{D}_K(1)$ in the Klein
disc model $\mathcal{D}_{K}$, 
we define the  Funk structure on the unit disc 
$\mathbb{D}_P(1)$
in the Poincar\'e   disc model $\mathcal{D}_{P}$. 
It is well known that the Klein disc model $\mathcal{D}_{K}$ and  Poincar\'e disc model $\mathcal{D}_{P}$ of the hyperbolic geometry are Finslerian isometric, i.e., norm preserving via the diffeomorphism $f: \mathcal{D}_{P} \rightarrow \mathcal{D}_{K}$ given by  $f(x)=\frac{2x}{1+|x|^2}$, for all  $x\in \mathcal{D}_P$ (see \cite{AMAK}, $\S 4$, Theorem 1). The following proposition is in order.
\begin{proposition}\label{ppn 4.32A}
     The unit disc $\mathbb{D}_P(1)$ centered at origin with respect to   the Poincar\'e distance $d_P(p,q)=\cosh^{-1}\left(1+ \frac{2|p-q|^2}{(1-|p|^2)(1-|q|^2)} \right)$ for all $p,q \in \mathcal{D}_{P}$ is identical to the  Euclidean disc $\mathbb{D}_E(r')$ centered at origin and radius $r'=\left( \frac{e-1}{e+1}\right)$, as a set.
\end{proposition}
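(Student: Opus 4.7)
The plan is to compute $d_P(0,x)$ explicitly using the given formula and then solve the inequality $d_P(0,x)<1$ for $|x|$ to identify the Euclidean radius of the set $\mathbb{D}_P(1)$. By definition,
\[
\mathbb{D}_P(1)=\{x\in\mathcal{D}_P : d_P(0,x)<1\},
\]
so the task reduces to a one-variable inequality in $|x|$.

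First I would substitute $p=0$ into the Poincaré distance formula to obtain
\[
d_P(0,x)=\cosh^{-1}\!\left(1+\frac{2|x|^2}{1-|x|^2}\right)=\cosh^{-1}\!\left(\frac{1+|x|^2}{1-|x|^2}\right).
\]
Since $\cosh$ is strictly increasing on $[0,\infty)$, the inequality $d_P(0,x)<1$ is equivalent to
\[
\frac{1+|x|^2}{1-|x|^2}<\cosh 1=\frac{e^2+1}{2e}.
\]
Because $|x|<1$, the denominator $1-|x|^2$ is positive, so cross-multiplying preserves the inequality.

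Setting $t=|x|^2$, a short rearrangement turns the last inequality into $t(e+1)^2<(e-1)^2$, i.e.\
\[
|x|^2<\left(\frac{e-1}{e+1}\right)^2,
\]
which is precisely $|x|<r'$ with $r'=(e-1)/(e+1)$. Therefore $\mathbb{D}_P(1)=\mathbb{D}_E(r')$ as sets, proving the proposition.

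The argument is entirely elementary, so there is no genuine obstacle; the only place that warrants care is the algebraic simplification of $2e(1+t)<(e^2+1)(1-t)$ into the clean form $t(e+1)^2<(e-1)^2$, which uses the identities $(e+1)^2=e^2+2e+1$ and $(e-1)^2=e^2-2e+1$. As an alternative sanity check one could invoke the Finsler isometry $f:\mathcal{D}_P\to\mathcal{D}_K$, $f(x)=2x/(1+|x|^2)$ from \cite{AMAK}: this sends $0$ to $0$ and preserves distances, so $\mathbb{D}_P(1)=f^{-1}(\mathbb{D}_K(1))=f^{-1}(\mathbb{D}_E(r))$ with $r=(e^2-1)/(e^2+1)$ by Proposition~\ref{ppn 4.32}, and solving $2s/(1+s^2)=r$ for $s=|x|$ yields $s=(e-1)/(e+1)=r'$, in agreement with the direct computation.
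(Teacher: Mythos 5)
Your computation is correct and supplies exactly the detail that the paper omits: the paper's own ``proof'' of this proposition is the single line ``The proof of $\mathbb{D}_P(1)=\mathbb{D}_E(r')$ is trivial,'' and your direct evaluation of $d_P(0,x)=\cosh^{-1}\bigl((1+|x|^2)/(1-|x|^2)\bigr)$ followed by solving $(1+|x|^2)/(1-|x|^2)<\cosh 1=(e^2+1)/(2e)$ is the intended elementary argument, parallel to the paper's proof of the analogous statement for the Klein disc. Your cross-check via the isometry $f(x)=2x/(1+|x|^2)$ and Proposition~\ref{ppn 4.32} is also consistent and correct.
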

\begin{proof}
    The proof of $\mathbb{D}_P(1)=\mathbb{D}_E(r')$  is trivial.
\end{proof}
\begin{thm}
    The Funk structure on the Poincar\'e unit disc $\mathbb{D}_P(1)$ in the  Poincar\'e disc model $\mathcal{D}_P$ is given by 
    \begin{equation}\label{eqnn 3.2AS1}
    F_P(x,\xi):=  \alpha_{P}(x, \xi) + \beta_{P}(x, \xi)
\end{equation}
where 
\begin{equation*}
\alpha_{P} (x, \xi)=\frac{2 \sqrt{(1-|x|^2)^2|\xi|^2-(1-r^2)\Big[(1+|x|^2)^2|\xi|^2-4\langle  x , \xi \rangle^2\Big]}}{(1-|x|^2)^2-(1-r^2)(1+|x|^2)^2},
\end{equation*}

\begin{equation*}
\beta_{P} (x, \xi)=
\frac{4(1-r^2)(1+|x|^2)\langle  x , \xi \rangle}{(1-|x|^2)\left( (1-|x|^2)^2-(1-r^2)(1+|x|^2)^2\right)},
\end{equation*}
here $x\in \mathbb{D}_P(1), ~\xi \in T_x\mathbb{D}_P(1)$.
\end{thm}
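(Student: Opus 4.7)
The section's introductory remark suggests the natural definition $F_P := f^*\mathcal{F}$, where $f: \mathcal{D}_P \to \mathcal{D}_K$ is the Finslerian isometry $f(x) = 2x/(1+|x|^2)$ and $\mathcal{F}$ is the Funk-Finsler metric of Theorem~\ref{thm0.3.1}. The plan is to pull back $\mathcal{F} = \alpha_F + \beta_F$ term by term, read off the Riemannian and $1$-form parts, and match them to the displayed $\alpha_P$ and $\beta_P$. Before computing, I would check that $f$ sends $\mathbb{D}_P(1)$ onto $\mathbb{D}_K(1)$; since $|f(x)| = 2|x|/(1+|x|^2)$ is monotone on $[0,1)$, it suffices to verify that $r' = (e-1)/(e+1)$ maps to $r = (e^2-1)/(e^2+1)$, which follows from $(e+1)^2 + (e-1)^2 = 2(e^2+1)$. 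Closedness of $\beta_P = f^*\beta_F$ is then automatic from closedness of $\beta_F$, so $F_P$ will be a Randers metric with a closed $1$-form.

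The computational heart is the differential
\begin{equation*}
df_x(\xi) = \frac{2(1+|x|^2)\xi - 4\langle x,\xi\rangle\, x}{(1+|x|^2)^2},
\end{equation*}
together with four auxiliary quantities feeding Theorem~\ref{thm0.3.1}. Writing $A := 1+|x|^2$, a direct inner-product expansion in which the would-be $|x|^2\langle x,\xi\rangle^2$ cross-terms cancel yields
\begin{equation*}
|df_x(\xi)|^2 = \frac{4\bigl(A^2|\xi|^2 - 4\langle x,\xi\rangle^2\bigr)}{A^4}, \qquad \langle f(x), df_x(\xi)\rangle = \frac{4(1-|x|^2)\langle x,\xi\rangle}{A^3},
\end{equation*}
together with $r^2 - |f(x)|^2 = (r^2A^2 - 4|x|^2)/A^2$ and $1 - |f(x)|^2 = (1-|x|^2)^2/A^2$.

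Substituting into the formula for $\alpha_F$ and collecting, the quantity under the radical becomes $(4/A^4)\bigl[(r^2A^2 - 4|x|^2)|\xi|^2 + 4(1-r^2)\langle x,\xi\rangle^2\bigr]$, where the collapse of the $\langle x,\xi\rangle^2$ coefficient uses the identity $-r^2A^2 + 4|x|^2 + (1-|x|^2)^2 = (1-r^2)A^2$. Dividing by $r^2 - |f(x)|^2$ and invoking the companion identity $(1-|x|^2)^2 - (1-r^2)A^2 = r^2A^2 - 4|x|^2$ to recast both numerator and denominator recovers $\alpha_P$ in precisely the stated form; the pullback of $\beta_F$ is then pure cancellation and produces $\beta_P$. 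The only real obstacle is the algebraic bookkeeping inside the square root, specifically confirming that the $\langle x,\xi\rangle^2$ coefficient collects to a single factor $(1-r^2)$; once that identity is in hand, no further ingredient beyond the chain rule and Theorem~\ref{thm0.3.1} is required.
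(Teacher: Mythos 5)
Your proposal is correct and follows essentially the same route as the paper: defining $F_P:=f^*\mathcal{F}$ for $f(x)=2x/(1+|x|^2)$, computing $df_x(\xi)$, the quantities $1-|f(x)|^2$, $|df_x(\xi)|^2$, $\langle f(x),df_x(\xi)\rangle$, and substituting into the Randers form of $\mathcal{F}$. Your explicit check that $f(\mathbb{D}_P(1))=\mathbb{D}_K(1)$ via $r'\mapsto r$ and the identity $(1-|x|^2)^2-(1-r^2)(1+|x|^2)^2=r^2(1+|x|^2)^2-4|x|^2$ are correct and merely make explicit what the paper leaves implicit.
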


\begin{proof}
Using the fact that $f:  \mathcal{D}_{P} \rightarrow    \mathcal{D}_{K}$, given by
 $f(x)=\frac{2x}{1+|x|^2}$, for all $x\in \mathcal{D}_P$ 
 is a diffeomorphism, it is easy to see that
$f(\mathbb{D}_P(1)) =\mathbb{D}_K(1)$.
The Funk structure $F_P$ on $\mathbb{D}_P(1)$ can be obtained  by pullback of the Funk structure $\mathcal{F}$ on $\mathbb{D}_K(1)$, given by  \eqref{eqn 4.31},  by the map $f$, i.e., 
    \begin{equation}\label{eqn AS43}
    F_P(x,\xi):= f^*\mathcal{F}(x,\xi)=\mathcal{F}(f(x), df_x(\xi))=\alpha_F(f(x), df_x(\xi))+\beta_F(f(x), df_x(\xi)), 
    \end{equation}
where $  (x,\xi) \in T\mathbb{D}_P(1)$.\\
Given a point $x=(x^1,x^2)\in \mathbb{D}_P(1)$,  $df_x$ is determined by the Jacobian 
    \begin{equation*}
        Jf(x)=\frac{2}{(1+|x|^2)^2}\begin{pmatrix}
 1+|x|^2-2(x^1)^2 & -2x^1x^2 \\
 -2x^1x^2 & 1+|x|^2-2(x^2)^2
 \end{pmatrix},
    \end{equation*}
then for every $\xi=(\xi^1, \xi^2) \in T_x\mathbb{D}_P(1) \cong \mathbb{R}^2$, we have 
 \begin{equation*}
        Jf(x)(\xi)=\frac{2}{(1+|x|^2)^2}\begin{pmatrix}
 \xi^1(1+|x|^2)-2x^1 \langle  x , \xi \rangle \\ \xi^2(1+|x|^2)-2x^2 \langle  x , \xi \rangle
 \end{pmatrix}.
      \end{equation*}
      Note that,
\begin{equation}\label{eqnn A51}
  1-|f(x)|^2=  \left( \frac{1-|x|^2}{1+|x|^2}\right)^2,
\end{equation}
\begin{equation}\label{eqnn A52}
    |df_x(\xi)|^2=\frac{4}{(1+|x|^2)^4}\left( (1+|x|^2)^2 |\xi|^2-4 \langle  x , \xi \rangle^2 \right).
\end{equation}
And
\begin{equation}\label{eqnn A53}
  \langle  f(x) , df_x(\xi) \rangle=  \frac{4}{(1+|x|^2)^3}\left( 1-|x|^2 \right)\langle  x , \xi \rangle.
\end{equation}
In view of \eqref{eqn 4.31}, \eqref{eqn AS43} and \eqref{eqnn A51}-\eqref{eqnn A53}, we have 

\begin{eqnarray}
\nonumber  F_P(x,\xi)&=&   \alpha_{P}(x, \xi) + \beta_{P}(x, \xi).
\end{eqnarray}
\end{proof}
\subsection{The Funk-Finsler metric on the unit disc in the Poincar\'e  upper half-plane}
In this subsection, we find the Funk structure on the disc of unit hyperbolic radius in the Poincar\'e upper half-plane. Also the Poincar\'e upper half-plane, Klein disc  and Poincar\'e  disc  are Finsler isometric to each other. The reasonable disc for the study will be the image of the  Poincar\'e unit disc or Klein unit disc via the respective isometries. It is easy to see that the unit hyperbolic disc centered at point $(0,2)$, (say $a$) in the Poincar\'e upper half-plane, which we denote by  $\mathbb{D}_U(a,1)$ is the image of the  Poincar\'e unit disc or Klein unit disc 
via the respective isometries.
The following proposition is in order:
\begin{proposition}\label{ppn 4.32AA}
     The unit hyperbolic disc $\mathbb{D}_U(a,1)$ centered at $a=\left( 0,2\right)$ is identical to the  Euclidean disc $\mathbb{D}_E\left(b, \Tilde{r}\right)$,  centered at $b=\left( 0,e+\frac{1}{e}\right)$  and radius $\Tilde{r}=e-\frac{1}{e}$, as a set.
\end{proposition}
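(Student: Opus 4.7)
The plan is to convert the hyperbolic equation $d_U(a,p)=1$ into a Euclidean equation and then recognize it as a circle by completing the square. I would first recall the closed form of the hyperbolic distance in the Poincar\'e upper half-plane: for any two points $p=(p^1,p^2)$ and $q=(q^1,q^2)$ in $\mathbb{U}$ one has
\begin{equation*}
\cosh d_U(p,q)=1+\frac{(p^1-q^1)^2+(p^2-q^2)^2}{2\,p^2 q^2}.
\end{equation*}
This formula is the standard consequence of integrating the metric \eqref{eqn 3.3} along the geodesic joining $p$ and $q$, and together with the fact that $\mathbb{U}$ is a length space, it will give us a clean analytical description of the hyperbolic disc.

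Next, I would specialize to $q=a=(0,2)$ and require $d_U(a,p)\le 1$, so that $\cosh d_U(a,p)\le \cosh 1=\tfrac12(e+e^{-1})$. Writing $p=(x,y)$, the condition becomes
\begin{equation*}
x^2+(y-2)^2\le 4y\bigl(\cosh 1-1\bigr)=2y\bigl(e+e^{-1}-2\bigr).
\end{equation*}
Expanding the left-hand side and grouping the linear-in-$y$ terms yields
\begin{equation*}
x^2+y^2\le 2y(e+e^{-1})-4.
\end{equation*}

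The final step is completing the square in $y$: moving the $y$-terms to the left and adding $(e+e^{-1})^2$ to both sides gives
\begin{equation*}
x^2+\bigl(y-(e+e^{-1})\bigr)^2\le (e+e^{-1})^2-4=(e-e^{-1})^2,
\end{equation*}
which is precisely the Euclidean disc of radius $\tilde r=e-e^{-1}$ centred at $b=(0,e+e^{-1})$. Reversing the implications (each step being an equivalence) shows the two sets coincide; one should also note in passing that the lowest point of this Euclidean disc lies at height $(e+e^{-1})-(e-e^{-1})=2e^{-1}>0$, so the whole set sits inside $\mathbb{U}$, confirming consistency with the hyperbolic description. There is no real obstacle here: the only mildly delicate point is remembering that hyperbolic discs in the upper half-plane are Euclidean discs but with a \emph{shifted} centre, so one must not expect $a=b$.
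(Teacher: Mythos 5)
Your proof is correct and follows exactly the route the paper takes: the paper simply cites the formula $\cosh d_U(p,q)=1+\frac{|p-q|^2}{2p^2q^2}$ and declares the identification ``easy to show,'' while you carry out the completing-the-square computation (including the identity $(e+e^{-1})^2-4=(e-e^{-1})^2$) that justifies it. No discrepancies.
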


\begin{proof} In view of the  fact that the hyperbolic distance in the upper half plane between two points $p=(p^1,p^2)$ and $ q=(q^1,q^2)\in U$ is given by
 $d_P(p,q)=\cosh^{-1}\left(1+ \frac{|p-q|^2}{2 p^2 q^2 } \right)$,  it is easy to show that as point set $\mathbb{D}_U(a,1)=\mathbb{D}_E\left(b, \Tilde{r}\right)$.
\end{proof}

\begin{thm}
    The Funk structure on the Poincar\'e unit disc $\mathbb{D}_U(a,1)$ in the  Poincar\'e upper half model $U$ is given by 
    \begin{equation}\label{eqnn 3.2AS112}
    F_U(x,\xi):=  \alpha_{U}(x, \xi) + \beta_{U}(x, \xi),
\end{equation}
where 

\begin{equation*}
\alpha_{U} (x, \xi)=
\frac{4\sqrt{16(x^2)^2|\xi|^2-(1-r^2)\Big[ 16\langle  x , \xi \rangle^2+\left\lbrace (4+|x|^2)\xi^1-2x^1\langle  x , \xi \rangle\right\rbrace^2 \Big]}}{16(x^2)^2-(1-r^2)(4+|x|^2)^2},
\end{equation*}

and 
\begin{equation*}
\beta_{U} (x, \xi)=\frac{(1-r^2)(4+|x|^2)\Big[x^1\xi^1(4+|x|^2)-(4-|x|^2+2(x^1)^2)\langle  x , \xi \rangle\Big]}{(x^2)^2\Big[16 (x^2)^2-(1-r^2)(4+|x|^2) \Big]},
\end{equation*}
here $x\in \mathbb{D}_U(a,1),~\xi\in T_x\mathbb{D}_U(a,1)$.

\end{thm}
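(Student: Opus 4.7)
The plan is to pull back the Funk-Finsler metric $\mathcal{F}=\alpha_F+\beta_F$ on $\mathbb{D}_K(1)$ through the Finsler isometry $g^{-1}:\mathbb{U}\to\mathcal{D}_K$ of Proposition \ref{ppn 4.31}. First I would verify that $g$ carries the Klein unit disc $\mathbb{D}_K(1)$ onto the hyperbolic disc $\mathbb{D}_U(a,1)$: since $g$ is a hyperbolic isometry and $g(0)=(0,2)=a$, it sends the hyperbolic $1$-ball at the origin of $\mathcal{D}_K$ to the hyperbolic $1$-ball centered at $a$ in $\mathbb{U}$. Thus $F_U$ can be legitimately defined on $\mathbb{D}_U(a,1)$ by
\begin{equation*}
F_U(x,\xi):=(g^{-1})^*\mathcal{F}(x,\xi)=\mathcal{F}\bigl(g^{-1}(x),\,d(g^{-1})_x(\xi)\bigr).
\end{equation*}
Being an isometric pullback of a Randers metric with closed one-form under a smooth diffeomorphism, $F_U$ is again a Randers metric, which explains the decomposition $F_U=\alpha_U+\beta_U$ in the statement.

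Next, using the explicit formula $g^{-1}(x)=\left(\frac{4-|x|^2}{4+|x|^2},\,\frac{4x^1}{4+|x|^2}\right)$, I would compute the Jacobian $J(g^{-1})(x)$ componentwise and assemble the three algebraic ingredients needed by the formula for $\mathcal{F}$: the norm $|g^{-1}(x)|^2$, the Euclidean norm $|d(g^{-1})_x(\xi)|^2$, and the inner product $\langle g^{-1}(x),d(g^{-1})_x(\xi)\rangle$. Two identities will do the heavy lifting in the denominators,
\begin{equation*}
1-|g^{-1}(x)|^2=\frac{16(x^2)^2}{(4+|x|^2)^2},\qquad r^2-|g^{-1}(x)|^2=\frac{16(x^2)^2-(1-r^2)(4+|x|^2)^2}{(4+|x|^2)^2},
\end{equation*}
the second one matching precisely the denominator appearing in $\alpha_U$ and $\beta_U$.

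Substituting these into the Riemannian part $\alpha_F(g^{-1}(x),d(g^{-1})_x(\xi))=\frac{\sqrt{(r^2-|g^{-1}(x)|^2)|d(g^{-1})_x(\xi)|^2+\langle g^{-1}(x),d(g^{-1})_x(\xi)\rangle^2}}{r^2-|g^{-1}(x)|^2}$ will produce the formula for $\alpha_U$, after clearing the common factor $(4+|x|^2)$ from numerator and denominator and recognizing $16(x^2)^2|\xi|^2-(1-r^2)\bigl[16\langle x,\xi\rangle^2+\{(4+|x|^2)\xi^1-2x^1\langle x,\xi\rangle\}^2\bigr]$ under the radical. Similarly, substituting into the one-form $\beta_F(g^{-1}(x),d(g^{-1})_x(\xi))=\frac{(1-r^2)\langle g^{-1}(x),d(g^{-1})_x(\xi)\rangle}{(r^2-|g^{-1}(x)|^2)(1-|g^{-1}(x)|^2)}$ and using the identities above will reproduce $\beta_U$.

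The main obstacle is purely algebraic: cleanly organizing the expansion of $|d(g^{-1})_x(\xi)|^2$ so that it factors into the form involving $\{(4+|x|^2)\xi^1-2x^1\langle x,\xi\rangle\}^2$ under the square root, and likewise checking that the mixed term $x^1\xi^1(4+|x|^2)-(4-|x|^2+2(x^1)^2)\langle x,\xi\rangle$ emerges naturally in $\beta_U$ after combining $\langle g^{-1}(x),d(g^{-1})_x(\xi)\rangle$ with $J(g^{-1})(x)$. No conceptual difficulty is expected beyond this bookkeeping, and the $(1-r^2)$ factor in both terms traces back to the analogous factor in $\beta_F$.
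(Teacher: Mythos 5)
Your proposal is correct and follows essentially the same route as the paper: pull back $\mathcal{F}=\alpha_F+\beta_F$ on $\mathbb{D}_K(1)$ through $g^{-1}(x)=\bigl(\tfrac{4-|x|^2}{4+|x|^2},\tfrac{4x^1}{4+|x|^2}\bigr)$, compute $1-|g^{-1}(x)|^2$, $|d(g^{-1})_x(\xi)|^2$ and $\langle g^{-1}(x),d(g^{-1})_x(\xi)\rangle$, and substitute. Your added observations --- that $g(0)=a$ together with $g$ being a hyperbolic isometry identifies the two unit discs, and that $r^2-|g^{-1}(x)|^2=\tfrac{16(x^2)^2-(1-r^2)(4+|x|^2)^2}{(4+|x|^2)^2}$ explains the common denominator --- are correct and in fact make the paper's computation more transparent.
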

\begin{proof}
Consider the diffeomorphism $g^{-1}: \mathbb{U} \rightarrow  \mathcal{D}_K$ given by \eqref{eqn4.A24} i.e, $g^{-1}(x)=\left( \frac{4-|x|^2}{4+|x|^2}, \frac{4x^1}{4+|x|^2}\right)$. And it is easy to see that
$g^{-1}(\mathbb{D}_U(a,1)) =\mathbb{D}_K(1)$.
Now from \eqref{eqn 4.31}, the Funk-Finsler metric on $\mathbb{D}_K(1)$ is  $\mathcal{F}(x, \xi)$.
We define, 
\begin{eqnarray}
   \nonumber  F_U(x,\xi)&:=& (g^{-1})^*\mathcal{F}(x,\xi)=\mathcal{F}(g^{-1}(x), dg^{-1}_x(\xi))\\
 \label{eqn 4. A57}    &=&\alpha_F(g^{-1}(x), dg^{-1}_x(\xi))+\beta_F(g^{-1}(x), dg^{-1}_x(\xi)),
\end{eqnarray}
where $  (x,\xi) \in T\mathbb{D}_U( a,1)$.\\
Given a point $x=(x^1,x^2)\in \mathbb{D}_U(a,1)$,  $dg^{-1}_x$ is given by
    \begin{equation*}
        dg^{-1}_x(\xi)=\frac{4}{(4+|x|^2)^2}\begin{pmatrix}
 -4 \langle  x , \xi \rangle \\  (4+|x|^2)\xi^1 -2x^1 \langle  x , \xi \rangle 
 \end{pmatrix},
      \end{equation*}
where $\xi=(\xi^1, \xi^2) \in T_x\mathbb{D}_U(a,1) \cong \mathbb{R}^2$. 
 
      Note that,
\begin{equation} \label{eqn 4. A59} 
  1-|g^{-1}(x)|^2=   \frac{16(x^2)^2}{(4+|x|^2)^2},
\end{equation}
\begin{equation} \label{eqn 4. A60} 
    |dg^{-1}_x(\xi)|^2=\frac{16\Big[16 \langle  x , \xi \rangle^2 +  \Bigl \{(4+|x|^2)\xi^1 -2x^1\langle  x , \xi \rangle \Bigl \}^2\Big]}{(4+|x|^2)^4},
\end{equation}
and
\begin{equation} \label{eqn 4. A61} 
  \langle  g^{-1}(x) , dg^{-1}_x(\xi) \rangle=  \frac{16\Big[x^1\xi^1\left( 4+|x|^2 \right)-\left( 4-|x|^2+2(x^1)^2 \right)\langle  x , \xi \rangle\Big]}{(1+|x|^2)^3}.
\end{equation}

In view of \eqref{eqn 4.31}, \eqref{eqn 4. A57} and \eqref{eqn 4. A59}-\eqref{eqn 4. A61},    we obtain
\begin{eqnarray}
\nonumber  F_U(x,\xi)= \alpha_{U}(x, \xi) + \beta_{U}(x, \xi).
\end{eqnarray}

\end{proof}

\section{Some curvatures of the Funk-Finsler structure on the Klein unit disc}
In this section, we explicitly obtain the expressions for the  $S$-curvature, Riemann curvature, flag curvature, and the Ricci curvature
of the Funk-Finsler structure $\mathcal{F}(x,\xi)$ on Klein unit disc  $\mathbb{D}_K(1)$. 

\subsection{The $\textbf{S}$-curvature of the Funk-Finsler metric on the Klein  unit disc}
In this subsection, we recall the formula for $S$-curvature of 
a general Randers metric $F=\alpha+\beta$, where $\alpha(x, \xi)=\sqrt{a_{ij}\xi^i\xi^j}$ and $\beta(x, \xi)=b_i(x)\xi^i$.\\
Let  $\bar{\Gamma}^k_{ij}(x)$  denote the  Christoffel symbols of Riemannian metric $\alpha$. Then we have,
 \begin{equation}\label{eqnn 4.3.48}
 b_{i|j}:=\frac{\partial b_i}{\partial x^j}-b_k \bar{\Gamma}^k_{ij}.
 \end{equation}
 We introduce the following notations,
 \begin{equation}\label{eqnn 4.3.50}
  r_{ij}:=\frac{1}{2}\left( b_{i|j}+b_{j|i}\right),~~ s_{ij}:=\frac{1}{2}\left( b_{i|j}-b_{j|i}\right).
  \end{equation}
 \begin{equation}\label{eqnn 4.3.51}
   s^i_j:=a^{ih}s_{hj},~~ s_j:=b_i s^i_j=b^j s_{ij}, ~~r_j:=b^i r_{ij}, ~~b^j=a^{ij} b_i,
   \end{equation}
\begin{equation}\label{eqnn 4.3.52}
e_{ij}:=r_{ij}+b_is_j+b_js_i,
\end{equation}
\begin{equation*}e_{00}:=e_{ij}\xi^i\xi^j,~~ s_0:=s_i\xi^i ~~ \text{and}~~ s^i_0:=s_j^i\xi^j.\end{equation*}\\
Now consider, 
 \begin{equation}\label{eqnn 4.3A}
      \rho:=\log \sqrt{\left( 1-||\beta||^2_\alpha\right)},~\mbox{and}~ \rho_0:=\rho_i\xi^i,\rho_i:=\rho_{x^i}(x).
  \end{equation}
It is well known that the $S$-curvature of the Randers metric $F=\alpha +\beta $ is given by,
\begin{equation}\label{eqnn 4.65}
 S=(n+1)\Big[\frac{e_{00}}{2F}-(s_0+\rho_0) \Big],
 \end{equation}
see \cite{CXSZ},  $\S 3.2$ for more details.

\vspace{.5cm}
\begin{thm}
    The $S$-curvature of the Funk-Finsler metric $\mathcal{F}(x,\xi)$, 
    given by \eqref{eqn 4.31}, on the Klein unit disc $\mathbb{D}_K(1)$  is given by:
    \begin{equation*}
     \textbf{S}(x,\xi)= \frac{3(1-r^2)\left[ (1-|x|^2)|\xi|^2+2\langle x , \xi \rangle^2\right] }{2\mathcal{F} (r^2-|x|^2)(1-|x|^2)^2}-\frac{3\langle x , \xi \rangle (1-r^2)^2(1+|x|^2)}{(r^2-|x|^2)(1-r^2|x|^2)(1-|x|^2)}  
    \end{equation*}
 where $(x,\xi) \in T\mathbb{D}_K(1)$.
\end{thm}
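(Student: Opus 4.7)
The plan is to specialise the Randers $S$-curvature formula \eqref{eqnn 4.65} to $\mathcal{F}=\alpha_F+\beta_F$ with $n=2$, so that
\[ S \;=\; 3\left[\frac{e_{00}}{2\mathcal{F}} \;-\; (s_0+\rho_0)\right], \]
and then to exploit the decisive structural feature already noted in Theorem \ref{thm0.3.1}: the one-form $\beta_F$ is exact, $\beta_F=df$ with $f=\tfrac{1}{2}\log\frac{1-|x|^2}{r^2-|x|^2}$. Closedness of $\beta_F$ forces $b_{i|j}=b_{j|i}$, so that by \eqref{eqnn 4.3.50} one has $s_{ij}\equiv 0$ and therefore $s_i=0,\ s_0=0$; then \eqref{eqnn 4.3.52} collapses to $e_{ij}=r_{ij}=b_{i|j}$. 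The theorem is thereby reduced to two independent computations: that of $\rho_0$ and that of $b_{i|j}\xi^i\xi^j$.

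For $\rho_0$, I first note the identity
\[ 1-\|\beta_F\|^2_{\alpha_F} \;=\; 1-\frac{|x|^2(1-r^2)^2}{r^2(1-|x|^2)^2} \;=\; \frac{(r^2-|x|^2)(1-r^2|x|^2)}{r^2(1-|x|^2)^2}, \]
obtained from \eqref{eqn2.5.121} by a short factorisation of the numerator. Writing $\rho=\tfrac{1}{2}\log(r^2-|x|^2)+\tfrac{1}{2}\log(1-r^2|x|^2)-\log(1-|x|^2)-\log r$, differentiating in $x^i$ and contracting with $\xi^i$ produces three rational terms that, over the common denominator $(r^2-|x|^2)(1-r^2|x|^2)(1-|x|^2)$, collapse to a multiple of $(1-r^2)^2(1+|x|^2)\langle x,\xi\rangle$; this is precisely the second summand of the announced $\textbf{S}$. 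For $e_{00}$, I would compute the Christoffel symbols $\bar{\Gamma}^k_{ij}$ of $\alpha_F$ from \eqref{eqn2.5.118}-\eqref{eqn2.5.119}, substitute into $b_{i|j}=\partial_j b_i - b_k\bar{\Gamma}^k_{ij}$ with $b_i$ read off from \eqref{eqn2.5.120}, and contract with $\xi^i\xi^j$.

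The main obstacle is the algebraic simplification of $b_{i|j}\xi^i\xi^j$. Every ingredient is an explicit rational function, yet the raw expansion involves many terms in $|x|^2,\ \langle x,\xi\rangle,\ |\xi|^2$ that must collapse to the compact quadratic $(1-r^2)\left[(1-|x|^2)|\xi|^2+2\langle x,\xi\rangle^2\right]/\left[(r^2-|x|^2)(1-|x|^2)^2\right]$. To keep the computation tractable I would exploit the rotational invariance of $a_{ij}$, $b_i$, and $f$: after conjugating by an orthogonal rotation one may take $x=(t,0)$ with $t=|x|$, and then only the three coefficients of $(\xi^1)^2,\ \xi^1\xi^2,\ (\xi^2)^2$ in $b_{i|j}\xi^i\xi^j$ need to be extracted, each a single rational function of $t$. $SO(2)$-covariance then forces the reassembled symmetric form in $\xi$ to be a combination of $|\xi|^2$ and $\langle x,\xi\rangle^2$ alone, matching the target. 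Inserting the resulting $e_{00}$ and $\rho_0$ into the displayed $S$-formula yields the stated expression for $\textbf{S}(x,\xi)$.
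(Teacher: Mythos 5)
Your proposal follows essentially the same route as the paper's proof: specialise the Randers $S$-curvature formula \eqref{eqnn 4.65} with $n=2$, use closedness of $\beta_F$ to conclude $s_{ij}=0$, $s_0=0$ and $e_{ij}=r_{ij}=b_{i|j}$, compute $e_{00}=b_{i|j}\xi^i\xi^j$ via the Christoffel symbols of $\alpha_F$, and obtain $\rho_0$ from the factorisation $1-\|\beta_F\|^2_{\alpha_F}=\frac{(r^2-|x|^2)(1-r^2|x|^2)}{r^2(1-|x|^2)^2}$. The only departure is your suggestion to exploit $SO(2)$-invariance by reducing to $x=(t,0)$ before contracting $b_{i|j}$ with $\xi^i\xi^j$, which is a computational convenience rather than a genuinely different argument.
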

\begin{proof}
From equation \eqref{eqnn 4.65}, to calculate the $S$-curvature of  Randers metric given by
\eqref{eqn 4.31}, we proceed as follows.
The Christoffel symbols $\bar{\Gamma}^k_{ij}(x)$ of Riemannian Klein metric $\alpha_F$  are given by
 \begin{eqnarray}
\label{eqnn 4.3.49}  \bar{\Gamma}^k_{ij}(x):=\frac{x^i \delta_{kj}+x^j \delta_{ki}}{(r^2-|x|^2)},~~ \mbox{for all}~~ x\in \mathbb{D}_K(1).
  \end{eqnarray}
   Clearly,
  \begin{equation}\label{eqnn 4.3.490} 
 \bar{\Gamma}^k_{ij}(x)=\bar{\Gamma}^k_{ji}(x).
  \end{equation}
 Using expression for $b_i$ from \eqref{eqn2.5.120}, we get,
   \begin{equation}\label{eqnn 4.3.520}
    \frac{\partial b_i}{\partial x^j}=\frac{\partial b_j}{\partial x^i}=\frac{(1-r^2)}{(r^2-|x|^2)(1-|x|^2)}\Big[ \delta_{ij}+\frac{2x^ix^j(1+r^2-2|x|^2)}{(r^2-|x|^2)(1-|x|^2)} \Big].
\end{equation}
Substituting \eqref{eqnn 4.3.490} and \eqref{eqnn 4.3.520}
in \eqref{eqnn 4.3.48}, we obtain
\begin{equation}\label{eqnn 4.3.521}
   b_{i|j} =b_{j|i}=\frac{(1-r^2)}{(r^2-|x|^2)(1-|x|^2)}\Big[ \delta_{ij}+\frac{2x^ix^j}{(1-|x|^2)} \Big].
\end{equation}
Using \eqref{eqnn 4.3.521} in \eqref{eqnn 4.3.50}  yields,
\begin{equation}\label{eqnn 4.3.510}
  s_{ij}=0,~ r_{ij}=b_{i|j}=\frac{(1-r^2)}{(r^2-|x|^2)(1-|x|^2)}\Big[ \delta_{ij}+\frac{2x^ix^j}{(1-|x|^2)} \Big] ,~\forall i,j=1,2.
   \end{equation}
Applying  \eqref{eqnn 4.3.510} in \eqref{eqnn 4.3.51} and \eqref{eqnn 4.3.52}, we have
 \begin{equation}\label{eqnn 4.3.53}
  s^i_j:=0,~ s_j:=0, ~ \mbox{and}~~ e_{ij}=r_{ij}=\frac{(1-r^2)\left[ (1-|x|^2)\delta_{ij}+2x^ix^j\right] }{(r^2-|x|^2)(1-|x|^2)^2}.
 \end{equation}
Let $G^i=G^i(x,\xi)$ and $\bar{G}^i=\bar{G}^i(x,\xi)$ denote the spray coefficients of $\mathcal{F}$ and $\alpha$ respectively. Then  $G^i$ and $\bar{G}^i$ are related by
\begin{equation}\label{eqnn 4.54}
G^i=\bar{G}^i+P \xi^i+Q^i,
\end{equation}  
where
\begin{equation}\label{eqnn 4.55}
P:=\frac{e_{00}}{2\mathcal{F}}-s_0, ~~ ~~  Q^i:=\alpha s^i_0  ~~ \mbox{and}~~ \bar{G}^i=\frac{1}{2}\bar{\Gamma}^i_{jk}\xi^j\xi^k,
\end{equation}
and  where $e_{00}:=e_{ij}\xi^i\xi^j$, $s_0:=s_i\xi^i$ and $s^i_0:=s_j^i \xi^j$.\\
 Therefore from \eqref{eqnn 4.3.49} , \eqref{eqnn 4.3.53} and \eqref{eqnn 4.55} we find  that,
 \begin{equation}\label{eqnn 4.3.56}
 P=\frac{e_{00}}{2\mathcal{F}}=\frac{r_{ij}\xi^i\xi^j}{2\mathcal{F}}= \frac{(1-r^2)\left[ (1-|x|^2)|\xi|^2+2\langle x , \xi \rangle^2\right] }{2\mathcal{F} (r^2-|x|^2)(1-|x|^2)^2},
 \end{equation}
 \begin{equation}\label{eqnn 4.3.57}
 Q^i=0 ~ \mbox{and}~~ \bar{G}^i=\frac{\xi^i\langle x , \xi \rangle}{(r^2-|x|^2)},~  i=1,2.
 \end{equation}
 Employing \eqref{eqnn 4.3.56}, \eqref{eqnn 4.3.57} in  \eqref{eqnn 4.54}, we see that the spray coefficients are given by
 \begin{equation*}\label{eqnn 4.3.58}
 G^i=\frac{\xi^i}{(r^2-|x|^2)}\left(\langle x , \xi \rangle+\frac{(1-r^2)\left[ (1-|x|^2)|\xi|^2+2\langle x , \xi \rangle^2\right]}{2\mathcal{F}(1-|x|^2)^2} \right). 
 \end{equation*}
Availing \eqref{eqn2.5.121} , \eqref{eqnn 4.3A} and \eqref{eqnn 4.3.56} in \eqref{eqnn 4.65} for $n=2$, we obtain $S$-curvature as:
 \begin{equation*}
     \textbf{S}(x,\xi)= \frac{3(1-r^2)\left[ (1-|x|^2)|\xi|^2+2\langle x , \xi \rangle^2\right] }{2\mathcal{F} (r^2-|x|^2)(1-|x|^2)^2}-\frac{3\langle x , \xi \rangle (1-r^2)^2(1+|x|^2)}{(r^2-|x|^2)(1-r^2|x|^2)(1-|x|^2)}  
    \end{equation*}
\end{proof}



\vspace{.5 cm}
\begin{corollary}
The Funk-Finsler metric $\mathcal{F}$ on the Klein unit disc $\mathbb{D}_K(1)$  is a non-Berwaldian Douglas metric. 
 \end{corollary}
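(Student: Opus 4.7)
The plan is to split the statement into its two parts and verify each using characterizations of Randers metrics that are already invoked earlier in the paper. Since $\mathcal{F} = \alpha_F + \beta_F$ is a Randers metric (Theorem \ref{thm0.3.1}), both the Douglas and Berwald conditions admit clean criteria in terms of the $1$-form $\beta_F$ alone.

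For the Douglas part, I would simply record the observation already made inside the proof of Theorem \ref{thm0.3.1}, namely that
\[
\beta_F = df \quad \text{with} \quad f(x) = \tfrac{1}{2}\log\!\left(\tfrac{1-|x|^2}{r^2-|x|^2}\right),
\]
so $\beta_F$ is exact and in particular closed. By the characterization of Douglas-type Randers metrics (see \cite{CXSZ}, Theorem $5.2.1$), a Randers metric is Douglas if and only if its $1$-form is closed. Applying this criterion immediately gives that $\mathcal{F}$ is a Douglas metric.

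For the non-Berwald part, I would use the classical fact (see \cite{SSZ}, Lemma $3.1.2$) that a Randers metric $\alpha + \beta$ is Berwald if and only if $\beta$ is parallel with respect to $\alpha$, that is, $b_{i|j} \equiv 0$. The covariant derivatives $b_{i|j}$ are exactly the quantities already computed during the $S$-curvature calculation: formula \eqref{eqnn 4.3.521} gives
\[
b_{i|j} = \frac{(1-r^2)}{(r^2-|x|^2)(1-|x|^2)}\!\left[\delta_{ij} + \frac{2x^ix^j}{1-|x|^2}\right],
\]
which is manifestly non-zero on $\mathbb{D}_K(1)$ since $r < 1$. Hence $\beta_F$ is not $\alpha_F$-parallel and $\mathcal{F}$ cannot be Berwald. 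Equivalently, one can appeal to the $S$-curvature formula just proved: a Berwald metric has vanishing $S$-curvature, whereas the expression for $\textbf{S}(x,\xi)$ obtained above is not identically zero.

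The main obstacle is essentially bookkeeping rather than conceptual: one only needs to make sure that the two characterizations cited are applied to $\beta_F$, not to the pullback $1$-forms $\beta_L$ or $\beta_+$ appearing in the hyperboloid and hemisphere realizations (which are not themselves defined on $\mathbb{D}_K(1)$). Once the identification is made, both conclusions follow from data already present in the excerpt, so no further computation is required.
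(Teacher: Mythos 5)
Your proposal is correct and follows essentially the same route as the paper: both parts rest on the standard Randers characterizations (Berwald $\Leftrightarrow$ $b_{i|j}=0$, Douglas $\Leftrightarrow$ $s_{ij}=0$, i.e.\ $\beta$ closed) applied to the already-computed covariant derivatives $b_{i|j}$ from \eqref{eqnn 4.3.521}. The only cosmetic difference is that you certify the Douglas property via the exactness $\beta_F=df$ noted in Theorem \ref{thm0.3.1}, whereas the paper reads off $s_{ij}=0$ from the symmetry $b_{i|j}=b_{j|i}$ in \eqref{eqnn 4.3.510}; these are equivalent, and your non-Berwald argument (the trace of $\delta_{ij}+2x^ix^j/(1-|x|^2)$ cannot vanish) is the paper's argument verbatim.
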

\begin{proof}
    It is known that a Randers metric $F=\alpha+\beta$ is Berwald if and only if $\beta$ is parallel with respect to $\alpha$, i.e., $b_{i|j}=0$ (see \cite{SSZ}, Lemma $3.1.2$) and $F$ is Douglas if and only if $b_{i|j}=b_{j|i}$ (or $s_{ij}=0$, see \cite{CXSZ}, Theorem $5.2.1$).
    In view of \eqref{eqnn 4.3.510} clearly  $\mathcal{F}$ is Douglas.\\
    If possible let $\mathcal{F}$ is Berwald,  i.e., $b_{i|j}=0$. Since $|x|^2 < r < 1$, we have
    \begin{equation*}
        \delta_{ij}+\frac{2 x^i x^j}{1-|x|^2}=0.
    \end{equation*}
    Put $i=j$ and summing over $i$, we get a contradiction, and hence $\mathcal{F}$ is a non-Berwaldian Douglas metric.
    \end{proof}

\begin{thm}
Let  $\mathcal{F}$ be the Funk metric on the Klein unit disc $\mathbb{D}_K(1)$  given by \eqref{eqn 4.31}. Then the Riemann curvature $R^i_k$, the Ricci curvature  $\textbf{Ric}$, and  the flag curvature $\textbf{K}$ of  $\mathcal{F}$ are given by
\begin{equation}
 R^i_k=- \left(  \delta^i_k \alpha_F^2-\alpha_F (\alpha_F)_k\xi^i \right)+\Bigg[ 3\left( \frac{\phi}{2\mathcal{F}}\right)^2  -\frac{\psi}{2\mathcal{F}}\Bigg]\left( \delta^i_k-\frac{\mathcal{F}_{\xi^k}}{\mathcal{F}}\xi^i\right) +\tau_k \xi^i,
 \end{equation}
and 
 \begin{equation}\label{eqn 4.4.2A}
\textbf{Ric} = \textbf{K} \mathcal{F}^2 =\Bigg[3\left( \frac{\phi}{2\mathcal{F}}\right)^2  -\frac{\psi}{2\mathcal{F}} \Bigg]-\alpha_F^2,
\end{equation}

where
$$ \psi=\frac{2(1-r^2)\langle x , \xi \rangle}{(1-|x|^2)^3(r^2-|x|^2)^2}\Bigg[ (1-|x|^2)|\xi|^2(3r^2-2|x|^2-1)-2 \langle x ,\xi \rangle^2(1+|x|^2-2r^2) \Bigg],$$
$$\phi=\frac{(1-r^2)\left[ (1-|x|^2)|\xi|^2+2\langle x , \xi \rangle^2\right] }{ (r^2-|x|^2)(1-|x|^2)^2}.$$
and 
\begin{equation*}
    \tau_k=\frac{(1-r^2)(x^k |\xi|^2-\xi^k \langle x , \xi \rangle)}{\mathcal{F} (r^2-|x|^2)^2(1-|x|^2)}.
\end{equation*}
 \end{thm}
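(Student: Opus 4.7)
The plan is to leverage the projective equivalence between $\mathcal{F}$ and $\alpha_F$. Because $\beta_F = df$ is closed (Theorem \ref{thm0.3.1}), we have $s_{ij} = 0$, $s^i_j = 0$, $s_0 = 0$, so \eqref{eqnn 4.54} collapses to a pure projective splitting $G^i = \bar{G}^i + P\xi^i$, with $\bar{G}^i$ the Klein spray recorded in \eqref{eqnn 4.3.57} and $P = \phi/(2\mathcal{F})$, where $\phi = e_{00} = r_{00}$ is exactly the numerator appearing in \eqref{eqnn 4.3.56}. A preliminary observation is that $\alpha_F$ has constant sectional curvature $-1$: the rescaling $y = x/r$ is a Riemannian isometry from $(\mathbb{D}_E(r),\alpha_F)$ onto the standard Klein unit disc of curvature $-1$. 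Consequently $\bar{R}^i_k = -\bigl(\alpha_F^2\delta^i_k - \alpha_F(\alpha_F)_{\xi^k}\xi^i\bigr)$, which matches the first summand of the claimed formula.

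The core calculation is to substitute $G^i = \bar{G}^i + P\xi^i$ into \eqref{eqn2.2.10} and expand. Exploiting the $2$-homogeneity of $\bar{G}^i$ (so that $\xi^j\partial_{\xi^j}\partial_{\xi^k}\bar{G}^i = \partial_{\xi^k}\bar{G}^i$) and the $1$-homogeneity of $P$ (so that $\xi^j P_{\xi^j} = P$ and $\xi^j P_{\xi^j\xi^k} = 0$), the four terms of \eqref{eqn2.2.10} reorganize into
\begin{equation*}
R^i_k = \bar{R}^i_k + \Xi\!\left(\delta^i_k - \tfrac{\mathcal{F}_{\xi^k}}{\mathcal{F}}\xi^i\right) + \tau_k\xi^i,
\end{equation*}
where $\Xi = 3P^2 - \Theta$ is built from $P$ and the horizontal derivative of $P$ along the Klein spray, while $\tau_k\xi^i$ absorbs the residual $\mathcal{F}_{x^k}$ and $\mathcal{F}_{\xi^k}$ contributions produced by the $1/\mathcal{F}$ factor in $P$. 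Writing $\Theta = \psi/(2\mathcal{F})$ and carrying out the (purely algebraic) differentiation of $\phi$ from \eqref{eqnn 4.3.56} against the Klein spray $\xi^m\partial_{x^m} - 2\bar{G}^m\partial_{\xi^m}$ will yield exactly the $\psi$ stated in the theorem; similarly, expanding the residual derivatives and substituting $\mathcal{F} = \alpha_F + \beta_F$ with the explicit $b_i$ from \eqref{eqn2.5.120} produces the asserted $\tau_k$.

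Once $R^i_k$ is in hand, the Ricci and flag curvature fall out with no further work. Since $\mathcal{F}$ is projectively equivalent to the constant-curvature metric $\alpha_F$ by Theorem \ref{lem 3.1}, it is of scalar flag curvature, so \eqref{eqn2.1.12} gives $\mathbf{Ric} = R^i_i = (n-1)\mathbf{K}\mathcal{F}^2$, which equals $\mathbf{K}\mathcal{F}^2$ in dimension two. Tracing the expression for $R^i_k$ already derived, using the Euler-type identities $(\alpha_F)_{\xi^i}\xi^i = \alpha_F$ and $\mathcal{F}_{\xi^i}\xi^i = \mathcal{F}$, and the antisymmetry $\tau_i\xi^i = 0$ that is visible directly from the stated $\tau_k$ (since $x^k\xi^k \cdot |\xi|^2 - \xi^k\xi^k\langle x,\xi\rangle = 0$), one reads off \eqref{eqn 4.4.2A}.

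The main obstacle is the bookkeeping in the second step: every cross derivative between $\bar{G}^i$ and $P$ must be correctly partitioned between the scalar piece $\Xi(\delta^i_k - \mathcal{F}_{\xi^k}\xi^i/\mathcal{F})$ and the traceless remainder $\tau_k\xi^i$, and then the horizontal derivative of $\phi$ must be simplified into the compact rational form claimed for $\psi$. The identity $\mathcal{F}\mathcal{F}_{\xi^k} = g_{kj}\xi^j$ for the Randers fundamental tensor, together with the closedness relation $b_{i|j} = b_{j|i}$ established in \eqref{eqnn 4.3.521}, will be the essential algebraic tools that compress the intermediate expressions into the forms stated in the theorem.
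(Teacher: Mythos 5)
Your proposal is correct and lands on the same structural identity the paper uses, but you reach it by a different route: the paper simply \emph{cites} the Cheng--Shen formula for the Riemann curvature of a Randers metric with closed $1$-form (\cite{CXSZ}, $\S 5.2$, eqs.\ (5.10), (5.12)), namely $R^i_k=\overline{R^i_k}+\bigl[3(\phi/2\mathcal{F})^2-\psi/2\mathcal{F}\bigr]\bigl(\delta^i_k-\mathcal{F}_{\xi^k}\xi^i/\mathcal{F}\bigr)+\tau_k\xi^i$ with $\phi=b_{i|j}\xi^i\xi^j$, $\psi=b_{i|j|k}\xi^i\xi^j\xi^k$, $\tau_k=\frac{1}{\mathcal{F}}(b_{i|j|k}-b_{i|k|j})\xi^i\xi^j$, whereas you propose to re-derive it by inserting the projective splitting $G^i=\bar{G}^i+P\xi^i$, $P=\phi/(2\mathcal{F})$, into the definition \eqref{eqn2.2.10} and exploiting homogeneity. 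Your bookkeeping is consistent: the coefficient $3(\phi/2\mathcal{F})^2-\psi/2\mathcal{F}$ is exactly $3P^2-P_{|0}-\text{(correction)}$ once one uses $\mathcal{F}_{|0}=r_{00}=\phi$ for a closed form, and your identification of $\psi$ as the horizontal derivative $\bigl(\xi^m\partial_{x^m}-2\bar{G}^m\partial_{\xi^m}\bigr)\phi$ agrees with $b_{i|j|k}\xi^i\xi^j\xi^k$. After that point the two arguments coincide: constant curvature $-1$ of $\alpha_F$ (your rescaling $y=x/r$ is a clean justification of what the paper asserts as well known) gives $\overline{R^i_k}$ via \eqref{eqn2.1.12}, the explicit $\phi$, $\psi$, $\tau_k$ come from $b_{i|j}$ and the Christoffel symbols already computed in the $S$-curvature section, $\tau_k\xi^k=0$, and tracing with $n=2$ yields $\mathbf{Ric}=\mathbf{K}\mathcal{F}^2$. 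What your route buys is self-containedness (no appeal to the cited formula); what it costs is the cross-term bookkeeping you yourself flag as the main obstacle, and, like the paper, you leave the final rational simplifications of $\psi$ and $\tau_k$ as asserted rather than displayed.
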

 \begin{proof} The Riemannian curvature of the Randers metric $F=\alpha+\beta$ with a closed $1$-form on an $n$-dimensional manifold is a given by (see \cite{CXSZ}, $\S 5.2$, equation $(5.10)$ and equation $(5.12)$)
 \begin{equation}\label{eqnn 4.3.70}
 R^i_k=\overline{R^i_k}+\Bigg[ 3\left( \frac{\phi}{2F}\right)^2  -\frac{\psi}{2F}\Bigg]\left( \delta^i_k-\frac{F_{\xi^k}}{F}\xi^i\right) +\tau_k \xi^i,
 \end{equation}
where
\begin{equation}\label{eqnn 4.3.72}
\phi:=b_{i|j}\xi^i\xi^j,~~~\psi:=b_{i|j|k}\xi^i\xi^j\xi^k,~~~\tau_k:=\frac{1}{F}\left( b_{i|j|k}-b_{i|k|j}\right) \xi^i\xi^j,
\end{equation}
and 
\begin{equation}\label{eqnn 4.3.720}
 b_{i|j|k}=\frac{\partial b_{i|j}}{\partial x^k}-b_{i|m}\bar{\Gamma}^m_{jk}-b_{j|m}\bar{\Gamma}^m_{ik}.   
\end{equation}
Here $\overline{R^i_k}$   denotes the Riemann curvature of the Riemannian metric $\alpha$.\\
Further,  the Ricci curvature of the Randers metric is given by
\begin{equation}\label{eqnn 4.3.71}
\textbf{Ric} =\overline{\textbf{Ric}} +(n-1)\Bigg[3\left( \frac{\phi}{2F}\right)^2  -\frac{\psi}{2F} \Bigg],
\end{equation} 
where $\overline{\textbf{Ric}}$   denotes the Ricci curvature of the Riemannian metric $\alpha$.\\
It is well known that the Gaussian curvature of the Klein metric $\alpha_F=\frac{\sqrt{\left(r^2-|x|^2 \right) |\xi|^2+ \langle x, \xi \rangle ^2 }}{r^2-|x|^2}$ on $\mathbb{D}_K(1)$  is $-1$. Therefore, from \eqref{eqn2.1.12} the Riemann curvature of the Klein metric is given by 
 \begin{equation}\label{eqnn 4.3.710}
\overline{R^i_k}=- \left(  \delta^i_k \alpha_F^2-\alpha_F (\alpha_F)_k\xi^i \right),~ (\alpha_F)_k:=\frac{\partial \alpha_F}{\partial \xi ^k}.
\end{equation} 
Considering \eqref{eqnn 4.3.70} with \eqref{eqnn 4.3.710}, we get the desired expression for the Riemann curvature.\\
In \eqref{eqnn 4.3.710}, use $i=k$ and then sum over $i$ for  $i =1,2$, we have,
\begin{equation}\label{eqnn 4.3.711}
\overline{R^i_i}=-\alpha_F^2.
\end{equation} 
In view of \eqref{eqnn 4.3.521}, \eqref{eqnn 4.3.720} for the Funk metric $\mathcal{F}$,  given by \eqref{eqn 4.31}, the functions $\phi$ and $\psi$ can be explicitly calculated as follows:
 \begin{eqnarray}
\label{eqnn 4.3.74} \phi:=b_{i|j}\xi^i\xi^j=\frac{(1-r^2)\left[ (1-|x|^2)|\xi|^2+2\langle x , \xi \rangle^2\right] }{ (r^2-|x|^2)(1-|x|^2)^2},
\end{eqnarray}
and
\begin{eqnarray}
\nonumber \psi&=&b_{i|j|k}\xi^i\xi^j\xi^k=\left(\frac{\partial b_{i|j}}{\partial x^k}-b_{i|m}\bar{\Gamma}^m_{jk}-b_{j|m}\bar{\Gamma}^m_{ik}\right)\xi^i\xi^j\xi^k\\
\nonumber &=&\frac{2(1-r^2)\langle x , \xi \rangle}{(1-|x|^2)^3(r^2-|x|^2)^2}\Bigg[ (1-|x|^2)|\xi|^2(3r^2-2|x|^2-1)\\
\label{eqnn 4.3.75} &~&~~~~~~~~~~~~~~~~~~~~~~~~~~~~~~~~~~~~~~~~~~~~~-2 \langle x ,\xi \rangle^2(1+|x|^2-2r^2) \Bigg].
\end{eqnarray}
Using \eqref{eqnn 4.3.521} and \eqref{eqnn 4.3.720} in \eqref{eqnn 4.3.72}, we obtain
\begin{eqnarray}
\nonumber \label{eqnn 4.3.76} \tau_k=\frac{(1-r^2)(x^k |\xi|^2-\xi^k \langle x , \xi \rangle)}{\mathcal{F} (r^2-|x|^2)^2(1-|x|^2)}.
\end{eqnarray}
Clearly, 
\begin{equation}\label{eqnn 4.3.77}
\tau_k\xi^k=0.
\end{equation}
Consequently, substituting \eqref{eqnn 4.3.711},  \eqref{eqnn 4.3.77} in \eqref{eqnn 4.3.70},
 putting, $i=k$ and using over $i$ for  $i =1,2$, we obtain:
\begin{equation*}\label{eqnn 4.3.712}
 R^i_i=-\alpha_F^2+\Bigg[ 3\left( \frac{\phi}{2\mathcal{F}}\right)^2  -\frac{\psi}{2\mathcal{F}}\Bigg],
 \end{equation*}
 where $\phi$ , $\psi$ respectively, is given by \eqref{eqnn 4.3.74}, \eqref{eqnn 4.3.75}. \\
 Thus the Ricci curvature for  $\mathcal{F}$ is:
\begin{eqnarray}
 \nonumber \textbf{Ric}=R^i_i =\Bigg[3\left( \frac{\phi}{2\mathcal{F}}\right)^2  -\frac{\psi}{2\mathcal{F}} \Bigg]-\alpha_F^2.
\end{eqnarray}
Since for dimension $2$, $\textbf{K}=\frac{\textbf{Ric}}{\mathcal{F}^2}$,  we have the desired result.
 \end{proof}
\begin{corollary}
The flag curvature of the Funk-Finsler structure at center $\textbf{0}=(0,0)$ of the disc is negative constant i.e., independent of the direction. 
 \end{corollary}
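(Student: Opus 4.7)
The plan is to simply substitute $x=\mathbf{0}$ into the formulas derived in the previous theorem and observe that the explicit $\xi$-dependence drops out completely, leaving a concrete constant that one can check by hand is negative.

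First, I would evaluate each building block at the origin. Since $|x|^2=0$ and $\langle x,\xi\rangle=0$ when $x=\mathbf{0}$, the Klein norm becomes $\alpha_F(\mathbf{0},\xi)=|\xi|/r$ and the one-form vanishes, $\beta_F(\mathbf{0},\xi)=0$, so $\mathcal{F}(\mathbf{0},\xi)=|\xi|/r$. The function $\phi$ collapses to $\phi(\mathbf{0},\xi)=(1-r^2)|\xi|^2/r^2$, and because every term of $\psi$ carries a factor of $\langle x,\xi\rangle$, one has $\psi(\mathbf{0},\xi)=0$. The correction $\tau_k$ also depends only through $(x^k|\xi|^2-\xi^k\langle x,\xi\rangle)$, but in any case it is killed in the flag curvature formula through $\textbf{Ric}=R^i_i$.

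Next, substituting these values into \eqref{eqn 4.4.2A} one finds
\[
\textbf{K}(\mathbf{0},\xi)=\frac{1}{\mathcal{F}^2}\left[3\left(\frac{\phi}{2\mathcal{F}}\right)^{\!2}-\frac{\psi}{2\mathcal{F}}-\alpha_F^2\right]\Bigg|_{x=\mathbf{0}}
=\frac{3(1-r^2)^2}{4}-1,
\]
since the factors of $|\xi|^2$ in numerator and denominator cancel exactly. This already shows that the flag curvature at the origin does not depend on the flagpole $\xi$.

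Finally, to verify that this constant is negative, I would plug in $r=\tfrac{e^2-1}{e^2+1}$, which gives $1-r^2=\tfrac{4e^2}{(e^2+1)^2}$, and hence
\[
\textbf{K}(\mathbf{0},\xi)=\frac{12\,e^4}{(e^2+1)^4}-1.
\]
One checks numerically (or by noting $(e^2+1)^4>e^8\gg 12e^4$) that this quantity is strictly less than zero, completing the proof. I do not anticipate a real obstacle here: the work is entirely computational and the simplifications come for free because $\langle \mathbf{0},\xi\rangle=0$ annihilates precisely those terms that carry direction-dependence.
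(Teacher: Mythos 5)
Your proposal is correct and follows essentially the same route as the paper: substitute $x=\mathbf{0}$ into \eqref{eqn 4.4.2A}, note that $\langle \mathbf{0},\xi\rangle=0$ kills $\psi$ and $\beta_F$, and watch the $|\xi|^2$ factors cancel to give $\textbf{K}(\mathbf{0},\xi)=\tfrac{3}{4}(1-r^2)^2-1<0$. In fact your fully substituted constant $\tfrac{12e^4}{(e^2+1)^4}-1$ is the accurate form of the paper's displayed value, which omits a square on the denominator $(e^2+1)$.
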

\begin{proof}
    From \eqref{eqn 4.4.2A}, it is easy to see that, 
    \begin{equation*}
    \textbf{K}(\textbf{0},\xi)=-\left(1-\frac{3}{4}(1-r^2)^2\right)=-\left(1-\frac{3}{4}\left(\frac{4e^2}{e^2+1}\right)^2\right).
    \end{equation*}
\end{proof}
\begin{corollary}
The flag curvature of the Funk-Finsler structure on the Klein unit disc at any point $x$ on the circle centred at origin and radius $a<r$  along its tangential direction is a negative radial function. 
 \end{corollary}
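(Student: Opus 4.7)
The plan is to plug the tangentiality condition $\langle x,\xi\rangle=0$ and the constraint $|x|=a$ directly into the formula \eqref{eqn 4.4.2A} and observe that the homogeneous dependence on $\xi$ cancels, leaving an expression that depends only on $a$. First I would note that at a point $x$ with $|x|=a$ on the circle of radius $a$, taking $\xi\in T_x\mathbb{D}_K(1)$ tangential to this Euclidean circle means precisely $\langle x,\xi\rangle=0$. Under this condition, the $1$-form evaluates to $\beta_F(x,\xi)=0$ by its defining formula in Theorem \ref{thm0.3.1}, so $\mathcal{F}(x,\xi)=\alpha_F(x,\xi)$.

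Next I would compute $\alpha_F$, $\phi$, and $\psi$ on this locus. Substituting $\langle x,\xi\rangle=0$ and $|x|^2=a^2$ gives
\begin{equation*}
\alpha_F^2(x,\xi)=\frac{|\xi|^2}{r^2-a^2},\qquad \phi(x,\xi)=\frac{(1-r^2)|\xi|^2}{(r^2-a^2)(1-a^2)},
\end{equation*}
and $\psi(x,\xi)=0$ because the explicit expression for $\psi$ contains $\langle x,\xi\rangle$ as an overall factor. Feeding these into \eqref{eqn 4.4.2A} yields
\begin{equation*}
\textbf{K}(x,\xi)=\frac{1}{\mathcal{F}^2}\left[3\left(\frac{\phi}{2\mathcal{F}}\right)^{\!2}-\frac{\psi}{2\mathcal{F}}\right]-\frac{\alpha_F^2}{\mathcal{F}^2}=\frac{3(1-r^2)^2}{4(1-a^2)^2}-1,
\end{equation*}
after the factor $|\xi|^2$ cancels (as it must, by $0$-homogeneity of the flag curvature in $\xi$). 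This expression depends only on $a$, establishing the radial nature of $\textbf{K}$ in the tangential direction.

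Finally, to verify negativity, I would use $0\le a<r<1$ to get $(1-a^2)>(1-r^2)>0$, hence $\frac{(1-r^2)^2}{(1-a^2)^2}<1$ and a fortiori $\frac{3(1-r^2)^2}{4(1-a^2)^2}<\frac{3}{4}<1$. Therefore $\textbf{K}(x,\xi)<0$ along the tangential direction at every such $x$. The only mildly delicate point is the bookkeeping of the $|\xi|^2$ factors in $\phi/(2\mathcal{F})$ and $\alpha_F^2/\mathcal{F}^2$, but once $\beta_F=0$ reduces $\mathcal{F}$ to $\alpha_F$ the cancellation is transparent; there is no genuine obstacle.
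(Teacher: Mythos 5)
Your proof is correct and takes essentially the same route as the paper: substitute $\langle x,\xi\rangle=0$ and $|x|=a$ into \eqref{eqn 4.4.2A}, which collapses $\mathcal{F}$ to $\alpha_F$ and annihilates $\psi$, giving $\textbf{K}=\frac{3(1-r^2)^2}{4(1-a^2)^2}-1=-\left(1-\frac{3}{4}\left(\frac{1-r^2}{1-a^2}\right)^2\right)$, exactly the paper's expression. You additionally make explicit the negativity check via $1-a^2>1-r^2>0$, which the paper leaves implicit.
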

\begin{proof}
   Since $\langle x(t) , \dot{x}(t) \rangle=0$. Therefore, from \eqref{eqn 4.4.2A}, we get
   \begin{equation*}
    \textbf{K}(x(t) , \dot{x}(t))=- \left(1-\frac{3}{4} \left(\frac{1-r^2}{1- a^2}\right)^2\right)=- \left(1-\frac{3}{4} \frac{1}{(1- a^2)^2}\left(\frac{4e^2}{e^2+1}\right)^2\right).
    \end{equation*}
\end{proof}

\subsection{Zermelo Navigation data for the Funk-Finsler structure $\mathcal{F}$}
It is well known that any Randers metric on a manifold $M$ has a Zermelo Navigation representation. For instant, if  $F=\alpha+\beta$ is given Randers metric with  $\alpha=\sqrt{a_{ij}(x)\xi^i\xi^j}$ and differential $1$-form $\beta=b_i(x)\xi^i$, satisfying $||\beta||^2_\alpha=a^{ij}b_ib_j < 1$. Then the Zermelo Navigation for this Randers metric is the triple $(M,h,W)$, where  $h=\sqrt{h_{ij}\xi^i \xi^j}$ with
\begin{equation*}
    h_{ij}= \epsilon (a_{ij}-b_ib_j), ~~ W^i=-\frac{b^i}{\epsilon},~ b^i=a^{ij}b_j~ \mbox{and}~ \epsilon=1-||\beta||^2_\alpha.
\end{equation*}
Moreover, $||W||_h=||\beta||_{\alpha}$.\\
Also given the Zermelo data, we can get back the Randers metric. 
And this 1-1 correspondence is useful in finding the geodesics of the Randers metric. 
 See for more details  \cite{SSZ}, Example $1.4.3$.\\\\
In this subsection, we obtain the Zermelo data for the Funk-Finsler metric $\mathcal{F}$, which is a trivially a Randers metric.
We have,
 \begin{equation*}
    \mathcal{F}(x, \xi) = \frac{\sqrt{\left(r^2-|x|^2 \right) |\xi|^2+ \langle x , \xi \rangle ^2 }}{r^2-|x|^2}+\frac{(1-r^2) \langle x ,\xi \rangle}{(r^2-|x|^2)(1-|x|^2)}.
   \end{equation*}

We need to find  $h_{ij}, W^i$ defined above. From 
\eqref{eqn2.5.121}  we have,  

\begin{equation}
 \epsilon=  1- ||\beta_F||^2_{\alpha_F}=1-\frac{|x|^2(1-r^2)^2}{r^2(1-|x|^2)^2}=\frac{(r^2-|x|^2)(1-r^2|x|^2)}{r^2(1-|x|^2)^2}.  
\end{equation}

Employing \eqref{eqn2.5.118} and \eqref{eqn2.5.120}, we see

\begin{equation}\label{eqn 4.100A}
       h_{ij} =\frac{(1-r^2|x|^2)}{r^2(1-|x|^2)^4} \Bigg[ \delta_{ij} (1-|x|^2)^2+ x^ix^j ( 2-r^2 -|x|^2) \Bigg], 
 \end{equation} 

Clearly,
 \begin{equation}\label{eqn 4.101A}
   W=\left( W^i \right)=\left( \frac{(1-r^2)(1-|x|^2)}{(1-r^2|x|^2)}x^i  \right).
\end{equation}
Also, 
$$||W||^2_h=||\beta_F||^2_{\alpha_F}=\frac{|x|^2(1-r^2)^2}{r^2(r^2-|x|^2)^2}. $$
 Thus we have 
 \begin{proposition}
     The Zermelo Navigation data for the Funk-Finsler structure $\mathcal{F}$ on the Klein unit disc is given by $\left( \mathbb{D}_K(1), h, W\right)$ where the components of the Riemannian metric $h$ is given by \eqref{eqn 4.100A} and that of the vector field by \eqref{eqn 4.101A}.
 \end{proposition}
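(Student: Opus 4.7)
The plan is to invoke the standard Randers-to-Zermelo dictionary recalled at the start of this subsection and verify by direct substitution that the Randers decomposition $\mathcal{F} = \alpha_F + \beta_F$ given by Theorem \ref{thm0.3.1} yields the announced $h$ and $W$. Since everything is a Riemannian computation in $\mathbb{R}^2$, no Finsler-specific machinery is needed beyond the formula $h_{ij} = \epsilon(a_{ij}-b_ib_j)$, $W^i = -b^i/\epsilon$, with $\epsilon = 1 - ||\beta_F||^2_{\alpha_F}$.

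First I would read off $\epsilon$ directly from \eqref{eqn2.5.121}: after putting the two fractions over a common denominator one gets $\epsilon = \tfrac{(r^2-|x|^2)(1-r^2|x|^2)}{r^2(1-|x|^2)^2}$. This is the factor that will appear in the scalar prefactor of \eqref{eqn 4.100A} and in the denominator of \eqref{eqn 4.101A}. Next, to produce the Riemannian metric $h$, I would compute $a_{ij}-b_ib_j$ using \eqref{eqn2.5.118} and \eqref{eqn2.5.120}. The diagonal and rank-one pieces split naturally: the $\delta_{ij}$-part inherits the $\tfrac{1}{(r^2-|x|^2)^2}$ from $a_{ij}$, while the $x^i x^j$-part combines a contribution $\tfrac{x^ix^j}{(r^2-|x|^2)^2}$ from $a_{ij}$ with $-\tfrac{(1-r^2)^2 x^ix^j}{(r^2-|x|^2)^2(1-|x|^2)^2}$ from $b_ib_j$. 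Factoring out the common piece and multiplying by $\epsilon$ collapses the prefactor to $\tfrac{1-r^2|x|^2}{r^2(1-|x|^2)^4}$, and the numerator in the $x^ix^j$ slot simplifies to $(2-r^2-|x|^2)$, matching \eqref{eqn 4.100A}.

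For the drift vector field, I would compute $b^i=a^{ij}b_j$ using the inverse \eqref{eqn2.5.119}. The key observation is that the off-diagonal term $-x^1x^2$ conspires with the diagonal $r^2-|x|^2+(x^k)^2$ so that contracting $(a^{ij})$ against the radial vector $(x^j)$ simply returns a multiple of $(x^i)$; explicitly, $a^{ij} x^j = \tfrac{r^2-|x|^2}{r^2}\,x^i$. Feeding this through $b_i = \tfrac{(1-r^2)x^i}{(r^2-|x|^2)(1-|x|^2)}$ gives $b^i=\tfrac{(1-r^2)}{r^2(1-|x|^2)}\,x^i$ (up to the factor cancelling $(r^2-|x|^2)$), and dividing by $-\epsilon$ — with due attention to the sign convention used for $W$ in this paper — reproduces \eqref{eqn 4.101A}. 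As a built-in consistency check one has $||W||_h = ||\beta_F||_{\alpha_F}$, which follows tautologically from the dictionary and matches the announced value $\tfrac{|x|^2(1-r^2)^2}{r^2(r^2-|x|^2)^2}$.

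The main obstacle is purely algebraic bookkeeping: tracking the cancellations between the three competing denominators $(r^2-|x|^2)$, $(1-|x|^2)$, and $(1-r^2|x|^2)$ when one multiplies $a_{ij}-b_ib_j$ by $\epsilon$, and in particular checking that the resulting coefficient of $x^ix^j$ collapses to the clean factor $(2-r^2-|x|^2)$. Nothing conceptually new is needed beyond the explicit formulas already established in Theorem \ref{thm0.3.1}, so the proof reduces to a careful but elementary verification.
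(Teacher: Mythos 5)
Your proposal follows essentially the same route as the paper: both simply feed the explicit data $a_{ij}$, $a^{ij}$, $b_i$ and $\epsilon=1-\Vert\beta_F\Vert^2_{\alpha_F}=\tfrac{(r^2-|x|^2)(1-r^2|x|^2)}{r^2(1-|x|^2)^2}$ from Theorem \ref{thm0.3.1} into the standard dictionary $h_{ij}=\epsilon(a_{ij}-b_ib_j)$, $W^i=-b^i/\epsilon$, and the key simplifications you identify (the difference of squares producing $(r^2-|x|^2)(2-r^2-|x|^2)$, and the radial eigenvector property of $a^{ij}$) are exactly what makes the computation close. One small correction: the eigenvalue identity is $a^{ij}x^j=\tfrac{(r^2-|x|^2)^2}{r^2}\,x^i$ (not $\tfrac{r^2-|x|^2}{r^2}\,x^i$), so $b^i=\tfrac{(1-r^2)(r^2-|x|^2)}{r^2(1-|x|^2)}\,x^i$; with that factor restored, $b^i/\epsilon$ reproduces \eqref{eqn 4.101A} exactly (up to the overall sign of $W$, a convention discrepancy already present in the paper itself).
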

 \textbf{Funding} The First author is supported by CSIR-UGC Senior Research Fellowship, India with Reference No. 1076/(CSIR-UGC NET JUNE 2017).

\end{document}